\newtheorem{theorem}{Theorem}
\newtheorem{lemma}[theorem]{Lemma}
\newtheorem{corollary}[theorem]{Corollary}
\newtheorem{proposition}[theorem]{Proposition}
\newtheorem{obs}[theorem]{Observation} \newtheorem{defi}[theorem]{Definition}
\newtheorem{exa}[theorem]{Example}
\newtheorem{rem}[theorem]{Remark}
\newtheorem{rems}[theorem]{Remarks}
\def\bsq{\blacksquare\medskip}
\def\H{\mathcal H}
\def\L{\mathcal L}
\def\K{\mathcal K}
\def\P{\mathcal P}
\def\B{\mathcal B}
\def\X{\mathcal X}
\def\ZZ{{\mathbf Z}}
\def\CCC{{\mathbf C}}
\def\RRR{{\mathbf R}}
\def\QQ{\mathbf Q}
\def\RR+{{\mathbf R}^*}
\def\KK{\mathbf K}
\def\AA{\mathbf A}
\def\Sol{\mathbf{Sol}_S}
\def\X{\mathbf{Nil}_S}
\def\hU{\widehat U}
\def\ind{{\rm Ind}}
\def\eps{\varepsilon}
\def\Ga{\Gamma}
\def\ga{\gamma}
\def\La{\Lambda}
\def\la{\lambda}
\def\vfi{\varphi}
\def\tous{\qquad\text{for all}\quad}
\def\tout{\qquad\text{for all}\quad}
\def\bs{\backslash}
\def\Aut{{\rm Aut}}
\def\ind{{\rm Ind}}
\def\Ker{{\rm Ker}}
\def\Ad{{\rm Ad}}
\def\Ind{{\rm Ind}}
\def\ZC{{\rm Zc}}
\def\bs{\backslash}
\def\eps{\varepsilon}
\begin{document}

\title[Groups of automorphisms of $S$-adic nilmanifolds]{On the spectral theory of groups of automorphisms of $S$-adic nilmanifolds}
\author{Bachir Bekka}
\address{Bachir Bekka \\ IRMAR \\ UMR-CNRS 6625 Universit\'e de  Rennes 1\\
Campus Beaulieu\\ F-35042  Rennes Cedex\\
 France}
\email{bachir.bekka@univ-rennes1.fr}
\author{Yves Guivarc'h}
\address{Yves Guivarc'h \\ IRMAR \\ UMR-CNRS 6625\\ Universit\'e de  Rennes 1\\
Campus Beaulieu\\ F-35042  Rennes Cedex\\
 France}
\email{yves.guivarch@univ-rennes1.fr}

 \subjclass{37A05, 22F30, 60B15, 60G50}
   
   \begin{abstract}
 Let $S=\{p_1, \dots, p_r,\infty\}$ for prime integers $p_1, \dots, p_r.$
 Let $X$ be an $S$-adic compact nilmanifold, equipped
 with the unique translation invariant probability measure $\mu.$ We characterize
 the countable  groups $\Ga$ of automorphisms of $X$ for which the
 Koopman representation $\kappa$ on $L^2(X,\mu)$ has a spectral gap.
 More specifically, we show that $\kappa$  does not have a spectral gap
 if and only if there exists a non-trivial $\Ga$-invariant 
 quotient solenoid (that is, a finite-dimensional, connected, compact abelian group) on which $\Ga$ acts as a  virtually abelian group.
  \end{abstract}
 \maketitle

 \section{Introduction}
 \label{S-Intro}
 Let $\Ga$ be a countable group acting measurably 
on a probability space $(X,\mu)$  by measure preserving transformations.
Let $\kappa=\kappa_{X}$ denote the corresponding Koopman   representation
of $\Ga$, that is, the unitary representation of $\Ga$  on $L^2(X,\mu)$ given by 
$$
\kappa(\ga) \xi(x)= \xi (\ga^{-1}x) \tout \xi\in  L^2(X,\mu), x\in X, \ga\in \Ga.
$$
We say that 
the action $\Ga\curvearrowright  (X,\mu)$ of $\Ga$ on $(X,\mu)$ has a \textbf{spectral gap}
if the restriction $\kappa_0$  of  $\kappa$ to the $\Ga$-invariant subspace
$$
L^2_0(X,\mu)=\{\xi\in L^2(X,\mu) \ :\ \int_X \xi (x) d\mu (x)=0\}
$$
does not weakly contain the trivial representation $1_\Ga;$
equivalently, if $\kappa_0$ does not have almost invariant vectors,
that is, there is \textbf{no} sequence $(\xi_n)_n$ of unit vectors
   in  $ L^2_0(X,\mu)$  such that 
   $$\lim_n\Vert \kappa_0(\ga)\xi_n-\xi_n\Vert=0\tout \ga\in \Ga.$$
The existence of a spectral gap admits the following useful quantitative version. Let $\nu$ be a probability
measure on $\Ga$  and $\kappa_0(\nu)$  the convolution operator  defined
on $L^2_0(X,\mu)$ by 
$$
\kappa_0(\nu)\xi =\sum_{\ga\in \Ga} \nu(\ga) \kappa_0(\ga) \xi \tous \xi\in L^2_0(X,\mu).
$$
Observe that  we have $\Vert \kappa_0(\nu) \Vert \leq 1$ 
and hence  $r(\kappa_0(\nu)) \leq 1$  for the  spectral 
radius $r(\kappa_0(\nu))$ of  $\kappa_0(\mu)$.
Assume that  $\nu$ is aperiodic, that is, the support of  $\nu$ is not contained in the coset of a  proper subgroup of $\Ga$. Then the action of $\Ga$ on $X$ has a spectral gap
if and only if $r(\kappa_0(\nu))<1$ and this is 
 equivalent to  $\Vert \kappa_0(\nu) \Vert<1$; for more details, see the survey \cite{BekkaSG}.

In this paper, we will be concerned with the case where $X$ is an $S$-adic nilmanifold, to be introduced below, 
and $\Ga$ is a subgroup of automorphisms of $X.$

 Fix a finite set  $\{p_1, \dots, p_r\}$ of integer primes and set $S= \{p_1, \dots, p_r, \infty\}.$
 For an integer $d\geq1,$  the product
 $$\QQ_S:= \prod_{p\in S} \QQ_p= \QQ_\infty\times\QQ_{p_1}\times\cdots\times \QQ_{p_r}$$
is a   locally compact ring, where $\QQ_\infty= \RRR$ and $\QQ_p$ is the field of $p$-adic numbers for a prime $p$. Let $\ZZ[1/S]=\ZZ[1/p_1, \cdots, 1/p_r]$ denote the subring  of $\QQ$ generated by $1$ and $\{1/p_1, \dots, 1/p_r\}.$ 
Through the diagonal embedding 
$$
\ZZ[1/S] \to \QQ_S, \qquad b\mapsto (b,\cdots, b),
$$
we may identify $\ZZ[1/S]$ with a  discrete and cocompact subring of $ \QQ_S.$

If  $\mathbf{G}$ is a linear algebraic  group defined over $\QQ,$ we denote by  
$\mathbf{G}(R)$  the group of elements of $\mathbf{G}$ with coefficients in $R$ and determinant invertible in $R,$ 
for every subring $R$ of  an overfield of $\QQ.$

Let $\mathbf{U}$ be a linear algebraic unipotent group defined over $\QQ,$
that is, $\mathbf{U}$ is an algebraic subgroup of  the group   
of  $n\times n$  upper triangular unipotent matrices  for some $n\geq 1,$
The group  $\mathbf{U}(\QQ_S)$ is a locally compact group and  $\La:=\mathbf{U}(\ZZ[1/S])$ is a cocompact lattice in  $\mathbf{U}(\QQ_S)$. The corresponding  $S$-\textbf{adic compact nilmanifold}  
$$\X= \mathbf{U}(\QQ_S)/\mathbf{U}(\ZZ[1/S])$$
will be equipped with the unique translation-invariant probability measure 
$\mu$ on the Borel subsets of $\X$.

For $p\in S,$ let  $\Aut(\mathbf{U}(\QQ_p))$ 
be  the group  of  continuous automorphisms of $\mathbf{U}(\QQ_s).$
Set $$\Aut(\mathbf{U}(\QQ_S)):=\prod_{p\in S} \Aut(\mathbf{U}( \QQ_p))$$
and denote by  $\Aut (\X)$ the subgroup
$$\{g\in\Aut(\mathbf{U}(\QQ_S))\mid g(\La) =\La\}.$$
Every  $g\in \Aut(\X)$ acts on $\X$ 
preserving the probability measure $\mu.$

The abelian quotient group 
$$\overline{\mathbf{U}(\QQ_S)}:=\mathbf{U}(\QQ_S)/[\mathbf{U}(\QQ_S), \mathbf{U}(\QQ_S)]$$ can be identified with $\QQ_S^d$ for some $d\geq 1$ and
the image $\Delta$ of $\mathbf{U}(\ZZ[1/S])$ in $\overline{\mathbf{U}(\QQ_S)}$ is a cocompact and discrete
subgroup of $\overline{\mathbf{U}(\QQ_S)};$
so, 
$$\Sol:= \overline{\mathbf{U}(\QQ_S)}/\Delta$$ is a  solenoid (that is,  is a finite-dimensional, connected, compact abelian group; see \cite[\S 25]{HeRo--63}). We refer to $\Sol$ as the $S$-\textbf{adic solenoid} attached to
the $S$- adic nilmanifold $\X.$
We equip $\Sol$ with the probability measure $\nu$ which is the image of $\mu$
under the canonical projection $\X\to \Sol.$

Observe that $\Aut(\QQ_S^d)$ is canonically isomorphic to $\prod_{s\in S}GL_d(\QQ_{s})$ and that  $\Aut(\Sol)$ can be identified with  the subgroup  $GL_d(\ZZ[1/S]).$
The group $\Aut(\X)$ acts naturally by automorphisms of $\Sol$;
we denote by 
$$p_S: \Aut(\X)\to GL_d(\ZZ[1/S])\subset  GL_d(\QQ)$$ 
the corresponding homomorphism.

\begin{theorem}
\label{Theo1}
Let $\mathbf{U}$ be an algebraic unipotent group defined over $\QQ$
and $S= \{p_1, \dots, p_r, \infty\},$ where $p_1, \dots, p_r$ are integer primes.
Let $\X= \mathbf{U}(\QQ_S)/\mathbf{U}(\ZZ[1/S]$ 
be the associated $S$-adic nilmanifold and let $\Sol$ be the corresponding $S$-adic solenoid, 
equipped with respectively  the probability measures  $\mu$ and $\nu$ as above.
 Let  $\Ga$ be a countable subgroup $\Aut(\X)$.
 The following properties are equivalent:
\begin{itemize}
 \item [(i)] The action $\Ga\curvearrowright  (\X,\mu)$  has a spectral gap.
 \item[(ii)] The action $p_S(\Ga) \curvearrowright  (\Sol, \nu)$  has a spectral gap,
where $p_S: \Aut(\X)\to GL_d(\ZZ[1/S])$ is the canonical  homomorphism.

\end{itemize}
\end{theorem}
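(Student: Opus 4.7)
The implication $(i)\Rightarrow(ii)$ is immediate. The canonical projection $\pi:\X\to\Sol$ is $\Ga$-equivariant through $p_S$ and pushes $\mu$ to $\nu$, so the pullback $\pi^*$ isometrically embeds $L^2_0(\Sol,\nu)$ as a $\Ga$-invariant subspace of $L^2_0(\X,\mu)$; any almost invariant sequence downstairs would pull back to one upstairs, contradicting $(i)$.

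For $(ii)\Rightarrow(i)$, I would argue by induction on the nilpotency class $s$ of $\mathbf{U}$. The base case $s=1$ is trivial since then $\X=\Sol$. In the inductive step, set $\mathbf{W}:=\mathbf{U}^{(s)}$, the last non-trivial term of the descending central series of $\mathbf{U}$. This is a $\Ga$-stable central $\QQ$-algebraic subgroup contained in $[\mathbf{U},\mathbf{U}]$, so $\mathbf{U}':=\mathbf{U}/\mathbf{W}$ has nilpotency class $s-1$ and \emph{the same abelianization} as $\mathbf{U}$. Putting $\X':=\mathbf{U}'(\QQ_S)/\mathbf{U}'(\ZZ[1/S])$ and $T:=\mathbf{W}(\QQ_S)/\mathbf{W}(\ZZ[1/S])$, the natural $\Ga$-equivariant fibration
\[
T\longrightarrow \X \longrightarrow \X'
\]
is a principal bundle with $T$ acting centrally on $\X$ by translations. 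Since $(\mathbf{U}')^{\mathrm{ab}}=\mathbf{U}^{\mathrm{ab}}$, the assumption $(ii)$ holds verbatim for $\mathbf{U}'$, and the inductive hypothesis yields a spectral gap for $\Ga$ on $(\X',\mu')$.

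Fourier decomposition along the central $T$-action gives the $\Ga$-stable orthogonal splitting
\[
L^2_0(\X,\mu)=L^2_0(\X',\mu')\oplus H,\qquad H:=\bigoplus_{\chi\in\widehat{T}\setminus\{1\}} L^2_\chi(\X),
\]
with $L^2_\chi(\X):=\{\xi : \xi(tx)=\chi(t)\xi(x)\text{ for all }t\in T\}$, on which $\Ga$ permutes the components according to its dual action on $\widehat{T}$. The induction handles $L^2_0(\X',\mu')$, so it suffices to exclude almost invariant vectors in $H$. For this I would use an amplification trick: for $\xi=\sum_\chi\xi_\chi\in H$, the vector $\eta\in\ell^2(\widehat{T}\setminus\{1\})$ defined by $\eta(\chi):=\|\xi_\chi\|$ satisfies $\|\eta\|=\|\xi\|$ and $\|\gamma\cdot\eta-\eta\|\le\|\gamma\cdot\xi-\xi\|$ for every $\gamma\in\Ga$, since $\gamma$ acts isometrically from $L^2_\chi$ onto $L^2_{\gamma\chi}$ and $\|a-b\|\ge\bigl|\|a\|-\|b\|\bigr|$. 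Pontryagin duality identifies $\ell^2(\widehat{T}\setminus\{1\})$ with $L^2_0(T,\lambda_T)$ as $\Ga$-modules, so an almost invariant sequence in $H$ would produce one in $L^2_0(T)$, contradicting a spectral gap for the $\Ga$-action on the solenoid $T$.

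The whole proof thus reduces to showing that $\Ga$ acts on $T$ with a spectral gap, and this is where I expect the main obstacle to lie. The key algebraic fact is that the iterated Lie bracket induces a $\Ga$-equivariant surjection $(\mathbf{U}^{\mathrm{ab}})^{\otimes s}\twoheadrightarrow\mathbf{W}$ of $\QQ$-algebraic groups, so $T$ is a $\Ga$-equivariant algebraic quotient of a tensor power of $\Sol$. What one therefore needs is a transfer principle for the abelian case: spectral gap for a group action on a $\QQ_S$-solenoid should be inherited by every algebraic sub-quotient of its tensor powers. Establishing this principle---most naturally by analyzing the Zariski closure of $p_S(\Ga)$ together with the virtually-abelian-quotient criterion announced in the abstract---is where the bulk of the independent work would sit.
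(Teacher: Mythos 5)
Your implication $(i)\Rightarrow(ii)$ and your amplification trick (passing from $\xi=\sum_\chi\xi_\chi$ to $\eta(\chi)=\Vert\xi_\chi\Vert$) are both fine; the latter is exactly the generalized Herz majoration principle that the paper isolates as Lemma~\ref{Lem-Herz}. The fatal problem is the endpoint of your reduction. You reduce everything to the claim that $\Ga$ acts with a spectral gap on the central solenoid $T=\mathbf{W}(\QQ_S)/\mathbf{W}(\ZZ[1/S])$, and propose to get this from a ``transfer principle'' asserting that spectral gap on $\Sol$ is inherited by algebraic sub-quotients of tensor powers of $\mathbf{U}^{\mathrm{ab}}$. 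Both the reduction target and the transfer principle are false. Already for $\mathbf{U}=\mathbf{H}_{2n+1}$ and $\Ga\subset Sp_{2n}(\ZZ[1/S])$, the last term of the central series is the one-dimensional center, on which $Sp_{2n}$ acts \emph{trivially}: the Koopman representation of $\Ga$ on $L^2_0(T)$ is then a multiple of $1_\Ga$, so there is certainly no spectral gap on $T$, and your majoration of the permutation action on $\widehat{T}\setminus\{1\}$ yields no contradiction whatsoever. (The same phenomenon kills the transfer principle in the abelian setting: $V^{\otimes 2}$ always surjects onto $\Lambda^2V$, which for $V=\QQ^2$ is the trivial $SL_2$-module.) In other words, your scheme only sees the permutation action of $\Ga$ on the set of central characters, and it collapses precisely in the case where the stabilizer $\Ga_\chi$ of a central character is large --- which is the main case of the theorem.

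What is needed in that case, and what the paper supplies, is an analysis of the actual representation of the stabilizer on each isotypic component, not of the permutation action. The paper decomposes $L^2(\X)\ominus L^2(\Sol)$ via Mackey theory into $\bigoplus_i\ind_{\Ga_i}^{\Ga}(\widetilde{\pi_i}|_{\Ga_i}\otimes W_i)$, where $\widetilde{\pi_i}$ is the projective extension of an irreducible representation $\pi_i$ of $\mathbf{U}(\QQ_S)$ to its stabilizer (Proposition~\ref{Pro-KoopmanDec}), runs an induction on $\sum_p\dim\ZC_p(\Ga)$ rather than on the nilpotency class, and handles the infinite-index stabilizers by Herz majoration much as you do. For the finite-index stabilizers it invokes the Howe--Moore theorem: $\widetilde{\pi_i}$ is strongly $L^r$ modulo its projective kernel $P_{\widetilde{\pi_i}}$ (Propositions~\ref{Pro-DecayRepAlgGr} and~\ref{Prop-ClosedSubg}), so a tensor power of $\kappa_2^{\rm fin}$ embeds in representations induced from $P_{\widetilde{\pi_i}}$; the rationality of the Kirillov data (Proposition~\ref{Prop-Rational}, \ref{Prop-Rational2}) then shows that $P_{\widetilde{\pi_i}}$ acts trivially on a nontrivial quotient $U/K_i$ admitting a lattice, hence fixes a nontrivial character of $\Sol$ itself --- which is how the weak containment is routed back to $\kappa_1$ on $L^2_0(\Sol)$ rather than to the central fiber $T$. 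None of this machinery is present or replaceable in your proposal, so the gap is not a missing technicality but the core of the theorem.
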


Actions by groups of automorphisms (or more generally groups of by affine transformations) on 
the $S$-adic solenoid $\Sol$ have been completely characterized in \cite[Theorem 5]{BekkaFrancini}.
The following result is an immediate consequence of this characterization and of Theorem~ \ref{Theo1}.
For a subset $T$ of $GL_d(\KK)$ for a field $\KK,$ we denote by $T^t=\{g^t \mid g\in T\}$ the set 
of transposed matrices from $T$.
\begin{corollary}
\label{Cor1} 
With the notation as in Theorem~ \ref{Theo1}, the  following properties are equivalent:
\begin{itemize}
 \item [(i)] The action of $\Ga$ on the $S$-adic nilmanifold $\X$  does not have a spectral gap.
\item [(ii)] There exists a  non-zero linear subspace $W$  of $\QQ^d$ which is invariant 
under $p_S(\Ga)^t$ and such that the image of $p_S(\Ga)^t$ in $GL(W)$ is a virtually abelian group.%
\end{itemize}
\end{corollary}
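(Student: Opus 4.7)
The plan is to obtain this corollary simply by chaining Theorem~\ref{Theo1} with the characterization of spectral gap for solenoid actions given in \cite[Theorem 5]{BekkaFrancini}. By Theorem~\ref{Theo1}, the failure of spectral gap for $\Ga \curvearrowright (\X,\mu)$ is equivalent to the failure of spectral gap for $p_S(\Ga) \curvearrowright (\Sol,\nu)$, so the entire task reduces to a purely solenoidal statement.

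Next, I would invoke \cite[Theorem 5]{BekkaFrancini}, which (as recalled in the abstract) says that the action of a countable subgroup $H$ of $\Aut(\Sol)=GL_d(\ZZ[1/S])$ on $(\Sol,\nu)$ fails to have a spectral gap precisely when there is a non-trivial $H$-invariant quotient solenoid on which $H$ acts as a virtually abelian group. Applied to $H=p_S(\Ga)$, this reduces the corollary to showing that the existence of such a quotient solenoid is equivalent to the existence of a non-zero $p_S(\Ga)^t$-invariant $\QQ$-subspace $W \subseteq \QQ^d$ on which $p_S(\Ga)^t$ has virtually abelian image in $GL(W)$.

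The remaining step is pure Pontryagin duality for the solenoid $\Sol = \QQ_S^d/\ZZ[1/S]^d$: its dual group is canonically identified with $\ZZ[1/S]^d$, sitting naturally inside $\QQ^d$, and the dual of the action of $g\in GL_d(\ZZ[1/S])$ is the action of $g^t$ on $\ZZ[1/S]^d$. Quotient solenoids of $\Sol$ correspond bijectively to subgroups of $\ZZ[1/S]^d$, and each such subgroup $\Lambda$ has a well-defined $\QQ$-linear span $W=\QQ\cdot \Lambda \subseteq \QQ^d$; invariance under $p_S(\Ga)$ on the quotient side corresponds to invariance of $W$ under $p_S(\Ga)^t$, and the action induced on the quotient solenoid is virtually abelian if and only if the image of $p_S(\Ga)^t$ in $GL(W)$ is virtually abelian (a virtually abelian subgroup of $GL_k(\ZZ[1/S])$ remains virtually abelian in $GL_k(\QQ)$, and conversely).

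I do not expect a genuine obstacle here: once Theorem~\ref{Theo1} and \cite[Theorem 5]{BekkaFrancini} are granted, the only thing to be careful about is the bookkeeping of the transpose convention and the fact that passing between a $\ZZ[1/S]$-submodule of $\ZZ[1/S]^d$ and its $\QQ$-span preserves both the $\Ga$-invariance and the virtually abelian nature of the induced action.
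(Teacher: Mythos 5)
Your proposal is correct and follows exactly the route the paper takes: the paper derives Corollary~\ref{Cor1} as an immediate consequence of Theorem~\ref{Theo1} combined with the characterization of spectral gap for solenoid actions in \cite[Theorem 5]{BekkaFrancini}, which is precisely your chain of reductions. The Pontryagin-duality bookkeeping you spell out (identifying $\widehat{\Sol}$ with $\ZZ[1/S]^d\subset\QQ^d$ and passing to the transposed action) is left implicit in the paper but matches its conventions in Section~\ref{UniDualSolenoid}.
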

Here is an immediate consequence of Corollary~\ref{Cor1}.
\begin{corollary}
\label{Cor2} 
With the notation as in Corollary~ \ref{Theo1},
assume that the linear representation of $p_S(\Ga)^t$ in $\QQ^d$
is irreducible. Then  the action $\Ga\curvearrowright  (\X,\mu)$   has a spectral gap.
\end{corollary}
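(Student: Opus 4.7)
My plan is to deduce Corollary~\ref{Cor2} as a direct contrapositive of Corollary~\ref{Cor1}. Suppose, for contradiction, that the action $\Ga \curvearrowright (\X,\mu)$ does not have a spectral gap. By Corollary~\ref{Cor1}, this yields a non-zero $\QQ$-linear subspace $W \subseteq \QQ^d$ which is stable under $p_S(\Ga)^t$ and on which $p_S(\Ga)^t$ acts through a virtually abelian quotient.

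Next I would invoke the irreducibility hypothesis: the only non-zero $p_S(\Ga)^t$-invariant subspace of $\QQ^d$ is $\QQ^d$ itself, so we must have $W = \QQ^d$. Consequently $p_S(\Ga)^t$ is virtually abelian as a subgroup of $GL_d(\QQ)$, which contradicts the assumption that its action on $\QQ^d$ is irreducible, and the proof is complete.

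The argument is essentially a two-line contrapositive from Corollary~\ref{Cor1}. The one place where a little care is needed is the final step -- the incompatibility between ``virtually abelian image'' and ``irreducible action on $\QQ^d$'' -- since this is where the full content of the irreducibility hypothesis is consumed. If necessary, I would pass to a finite-index abelian normal subgroup of $p_S(\Ga)^t$, extend scalars to an algebraic closure, and use the joint weight decomposition together with Clifford theory to produce the required contradiction. I expect this verification to be the main obstacle, though it is purely algebraic and should not require any input beyond what is already in the setup of Theorem~\ref{Theo1}.
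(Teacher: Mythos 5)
Your first step is the right one, and it is evidently all the paper has in mind: the paper gives no proof of this corollary, declaring it an immediate consequence of Corollary~\ref{Cor1}, and the reduction is exactly your contrapositive --- by irreducibility the only candidate for the subspace $W$ supplied by Corollary~\ref{Cor1} is $W=\QQ^d$, so the absence of a spectral gap forces $p_S(\Ga)^t$ to be virtually abelian. The gap is in your final step, precisely where you locate ``the main obstacle'': there is \emph{no} incompatibility between $p_S(\Ga)^t$ being virtually abelian and its acting irreducibly on $\QQ^d$. Schur's lemma excludes irreducible representations of abelian groups in dimension $>1$ only over an algebraically closed field; over $\QQ$ the commutant may be a number field of degree $d$. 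Concretely, the cyclic group generated by the companion matrix of any irreducible polynomial of degree $d$ over $\QQ$ acts irreducibly on $\QQ^d$, since the $\QQ$-algebra it generates is a field $K$ with $[K:\QQ]=d$ and $\QQ^d$ is a one-dimensional $K$-vector space. Passing to a finite-index abelian subgroup, extending scalars and applying Clifford theory cannot rescue the step: the weight decomposition over $\overline{\QQ}$ says nothing about irreducibility over $\QQ$, and the statement you are trying to contradict is simply true for such groups.

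In fact the obstruction is not only in your argument; the corollary as printed needs an extra hypothesis. Take $S=\{\infty\}$ and $\mathbf U=\mathbf G_a^2$, so that $\X=\TT^2$, and let $\Ga$ be the cyclic group generated by $A=\left(\begin{smallmatrix}2&1\\1&1\end{smallmatrix}\right)\in SL_2(\ZZ)$. The characteristic polynomial $x^2-3x+1$ of $A^t$ is irreducible over $\QQ$, so the hypothesis of the corollary holds; yet $\Ga\cong\ZZ$ is amenable, and no measure-preserving action of a countable amenable group on a non-atomic probability space has a spectral gap (see \cite{JuRo}, cited in the paper). This is consistent with Corollary~\ref{Cor1} (take $W=\QQ^2$, on which the image of $\Ga^t$ is abelian) and shows the conclusion fails. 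The statement your first paragraph actually proves, and presumably the one intended, is: if the representation of $p_S(\Ga)^t$ on $\QQ^d$ is irreducible \emph{and} $p_S(\Ga)$ is not virtually abelian, then the action $\Ga\curvearrowright(\X,\mu)$ has a spectral gap. With that added hypothesis your two-line contrapositive is complete and no further algebraic verification is needed.
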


 Recall that the action of a  countable group $\Ga$
 by measure preserving transformations on a probability space $(X, \mu)$ is  \textbf{strongly ergodic} 
 (see \cite{Schmidt}) if every
 sequence $(B_n)_n$ of measurable subsets of  $X$ which is asymptotically invariant
 (that is,  which is such that  $\lim_n\mu(\ga B_n \bigtriangleup B_n)=0$ for all $\ga\in \Ga$)
is trivial (that is, $\lim_n\mu( B_n)(1-\mu(B_n))=0$).
It is straightforward to check that  the spectral gap property implies strong ergodicity and  it is known that the converse 
does not hold in general.

The following corollary  is  a direct consequence of Theorem~\ref{Theo1} (compare with 
Corollary 2 in  \cite{BachirYves}).
\begin{corollary}
\label{Cor2}
With the notation as in \ref{Theo1},  the following properties are equivalent:
\begin{itemize} 
\item[(i)] The action $\Ga\curvearrowright  (\X,\mu)$
 has the spectral gap property. 
 \item[(ii)]  The action $\Ga\curvearrowright  (\X,\mu)$ is strongly ergodic. 
 \end{itemize}
\end{corollary}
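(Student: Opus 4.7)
The implication (i) $\Rightarrow$ (ii) is the standard fact mentioned in the excerpt: if $(B_n)_n$ is asymptotically invariant in $\X$ and we set $\xi_n = \mathbf{1}_{B_n} - \mu(B_n)\mathbf{1}_\X \in L^2_0(\X,\mu)$, then the assumption of asymptotic invariance translates into $\Vert \kappa_0(\ga)\xi_n - \xi_n\Vert \to 0$ for every $\ga\in\Ga$, so non-triviality of the $(B_n)$ would produce almost invariant unit vectors for $\kappa_0$, contradicting the spectral gap.

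The plan for the nontrivial direction (ii) $\Rightarrow$ (i) is to reduce, via Theorem~\ref{Theo1}, to the already established solenoid case. First I would observe that the canonical projection $\pi : \X \to \Sol$, obtained by composing $\X = \mathbf{U}(\QQ_S)/\La$ with the abelianization $\mathbf{U}(\QQ_S) \to \overline{\mathbf{U}(\QQ_S)}$ and quotienting by $\Delta$, is $\Ga$-equivariant with respect to the action of $p_S(\Ga)$ on $\Sol$, and pushes $\mu$ forward to $\nu$. Consequently, for any asymptotically invariant sequence $(C_n)_n$ of Borel subsets of $\Sol$ for the $p_S(\Ga)$-action, the pulled-back sequence $B_n := \pi^{-1}(C_n)$ in $\X$ satisfies $\mu(\ga B_n \bigtriangleup B_n) = \nu(p_S(\ga) C_n \bigtriangleup C_n) \to 0$ and $\mu(B_n) = \nu(C_n)$. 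Hence strong ergodicity of $\Ga \curvearrowright (\X,\mu)$ passes down to strong ergodicity of $p_S(\Ga)\curvearrowright (\Sol,\nu)$.

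At this point I would invoke the solenoidal analogue of the corollary, proved as Corollary~2 in \cite{BachirYves}: for a countable group of automorphisms of an $S$-adic solenoid, strong ergodicity is equivalent to the spectral gap property. Applied to $p_S(\Ga)\curvearrowright (\Sol,\nu)$, this yields property (ii) of Theorem~\ref{Theo1}. Theorem~\ref{Theo1} then immediately gives (i), namely that $\Ga \curvearrowright (\X,\mu)$ has a spectral gap, closing the loop.

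There is no serious obstacle here, since the argument is purely a matter of assembly: the two substantive inputs (Theorem~\ref{Theo1} and the solenoidal Corollary~2 of \cite{BachirYves}) are already at our disposal, and the only step one must verify by hand is that strong ergodicity descends along the equivariant factor map $\X\to\Sol$, which is a direct consequence of the fact that $\pi$ is measure-preserving and $\Ga$-equivariant. The only mild subtlety to keep in mind is that $\Ga$ acts on $\Sol$ only through its image $p_S(\Ga)$, but this is harmless since asymptotic invariance under $\Ga$ and under $p_S(\Ga)$ coincide on $\Sol$.
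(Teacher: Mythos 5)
Your argument is correct and is essentially the route the paper intends: the corollary is stated there as a direct consequence of Theorem~\ref{Theo1}, the point being exactly your chain (spectral gap implies strong ergodicity always; strong ergodicity descends along the measure-preserving, equivariant factor map $\X\to\Sol$; on the solenoid strong ergodicity implies the spectral gap; Theorem~\ref{Theo1} lifts the spectral gap back to $\X$). One correction of attribution only: the solenoidal equivalence of strong ergodicity and the spectral gap property that you need is the result of \cite{BekkaFrancini}, which treats $S$-adic solenoids, whereas Corollary~2 of \cite{BachirYves} is the analogous statement for real compact nilmanifolds and is invoked by the paper only for comparison.
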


 Theorem \ref{Theo1}  generalizes our previous work \cite{BachirYves}, 
 where  we treated  the real case (that is,   the case $S=\infty$).  We had to extend 
 our methods to the $S$-adic setting.  There are three main tools we  use in the proof:
 \begin{itemize}
 \item  a canonical  decomposition  of the Koopman representation of $\Ga$ in $L^2(\X)$ as
 a direct sum of certain  representations of $\Ga$ induced from stabilizers of 
 representations of $\mathbf{U}(\QQ_S)$ (see Proposition~\ref{Pro-KoopmanDec});
 \item  a result of Howe and Moore \cite{HoMo} about the decay of matrix coefficients of algebraic groups (see 
 Proposition~\ref{Pro-DecayRepAlgGr});
 \item the fact  that the irreducible representations of  $\mathbf{U}(\QQ_S)$ 
 appearing in the decomposition of $L^2(\X)$ are rational, in the sense that 
 the Kirillov data associated to each one of them are defined over $\QQ$ (see 
 Proposition~\ref{Prop-Rational}). 
 \end{itemize} 
 Another tool we constantly use is a generalized version 
 of Herz's majoration principle (see Lemma~\ref{Lem-Herz}).
 
 Given a probability measure $\nu$ on $\Ga,$  our approach does not seem 
 to provide  quantitative estimates  for the  operator norm of the convolution 
 operator $\kappa_0(\nu)$ acting on  $L^2_0(\X,\mu)$ 
 for a  general unipotent group $\mathbf{U}.$  However, 
 using known bounds for the so-called metaplectic representation of $Sp_{2n}(\QQ_p)$,
 we give such estimates in the case of $S$-adic Heisenberg nilmanifolds (see Section~\ref{S:Exa}).
\begin{corollary}
 \label{Cor3}
For an integer $n\geq 1,$ let $\mathbf{U}=\mathbf{H}_{2n+1}$
be the $(2n+1)$- dimensional Heisenberg group
and  $\X=\mathbf{H}_{2n+1}(\QQ_S)/ \mathbf{H}_{2n+1}(\ZZ[1/S]).$  
Let $\nu$ be a probability measure on 
$$Sp_{2n}(\ZZ[1/S])\subset \Aut(\X).$$
Then 
$$
\Vert \kappa_0(\nu)\Vert \leq \max\{\Vert \la_\Ga(\nu)\Vert^{1/2n+2}, \Vert \kappa_1(\nu)\Vert\},
$$
where $\kappa_1$ is the restriction of $\kappa_0$ to $L^2_0(\Sol)$
and  $\la_\Ga$ is the regular representation
of the group $\Ga$ generated by the support of $\nu.$
In particular, in the case $n=1,$  the action of $\Ga$ on $\X$ has a spectral gap 
 if and only if $\Ga$
is non-amenable.
\end{corollary}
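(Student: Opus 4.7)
The plan is to split $L^2_0(\X)$ into $L^2_0(\Sol)$ plus a ``fibre'' part on which $\Ga\subset Sp_{2n}(\ZZ[1/S])$ acts through metaplectic representations of $Sp_{2n}(\QQ_S)$, and then to feed in sharp mixing estimates for these Weil representations.

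First I would apply the Koopman decomposition of Proposition~\ref{Pro-KoopmanDec} to the centre $Z$ of the Heisenberg group. Here $Z\cong\QQ_S$ and $Z\cap\La=\ZZ[1/S]$ is cocompact and discrete in $Z$, so the decomposition of $L^2(\X)$ under $Z$ takes the form
$$L^2(\X)\;=\;L^2(\Sol)\;\oplus\;\bigoplus_{\chi}\H_\chi,$$
with $\chi$ running over the non-trivial unitary characters of $Z/\ZZ[1/S]$. By the Stone--von Neumann theorem over $\QQ_S$, each $\H_\chi$ is a finite multiple of the unique irreducible representation $\rho_\chi$ of $\mathbf{H}_{2n+1}(\QQ_S)$ with central character $\chi$, and $Sp_{2n}(\QQ_S)$ acts on $\H_\chi$ through the associated Weil (metaplectic) representation $\omega_\chi$. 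Restricted to $\Ga$ this gives
$$\kappa_0\;\simeq\;\kappa_1\;\oplus\;\bigoplus_\chi\bigl(\omega_\chi|_\Ga\bigr)^{(m_\chi)}.$$

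Next I would feed in the key metaplectic estimate. For every $p\in S$, Proposition~\ref{Pro-DecayRepAlgGr} yields decay of matrix coefficients of $\omega_\chi$ on $Sp_{2n}(\QQ_p)$ placing them in $L^{2n+2+\varepsilon}$; combined with Herz's majoration principle (Lemma~\ref{Lem-Herz}) this implies that a suitable tensor power of $\omega_\chi$ is weakly contained in the regular representation of $Sp_{2n}(\QQ_p)$. Assembling these bounds over the places of $S$ and restricting to the discrete subgroup $\Ga$, one obtains the uniform (in $\chi$) bound
$$\Vert\omega_\chi(\nu)\Vert\;\leq\;\Vert\la_\Ga(\nu)\Vert^{1/(2n+2)}.$$
Since the operator norm of a direct sum is the supremum of the summand norms, combining with the $L^2_0(\Sol)$ piece gives the announced inequality. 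The main obstacle is precisely this uniform metaplectic bound across all places of $S$ (with particular care at small primes, notably $p=2$); once in hand, the rest is bookkeeping.

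For the case $n=1$, observe that $Sp_2=SL_2$ and $\Sol$ is a $2$-dimensional solenoid. If $\Ga$ is amenable, Schmidt's theorem rules out strong ergodicity, and \emph{a fortiori} a spectral gap, on the non-atomic probability space $(\X,\mu)$. If $\Ga$ is non-amenable, choose a symmetric, aperiodic, finitely supported probability measure $\nu$ on $\Ga$. Kesten's theorem gives $\Vert\la_\Ga(\nu)\Vert<1$, and Corollary~\ref{Cor1} applied to $\Sol$ gives $\Vert\kappa_1(\nu)\Vert<1$: otherwise $\Ga^t$ would stabilise a subspace $W\subseteq\QQ^2$ on which it acts virtually abelian, forcing $\Ga$ itself to be virtually solvable and thus amenable, contrary to assumption. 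The main inequality then yields $\Vert\kappa_0(\nu)\Vert<1$, i.e.\ a spectral gap.
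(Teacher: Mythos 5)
Your overall strategy is the paper's: decompose $L^2_0(\X)$ over the centre, identify the complement of $L^2_0(\Sol)$ via Stone--von Neumann with (twisted multiples of) the Weil representations $\omega_\chi$ of $Sp_{2n}(\QQ_S)$, bound the norm on that part by a power of $\Vert\la_\Ga(\nu)\Vert$ using integrability of matrix coefficients, and settle $n=1$ by amenability. Your $n=1$ argument is sound, though the paper argues more directly that the stabilizer in $SL_2(\ZZ[1/S])$ of any non-trivial character of $\Sol$ is conjugate into the unipotent subgroup, hence amenable, so that $\kappa_1$ is weakly contained in $\la_\Ga$ and Kesten's theorem applies; your detour through Corollary~\ref{Cor1} reaches the same conclusion.

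The gap is in the step you yourself flag as the main obstacle, and the two ingredients you offer for it are not the right ones. First, the integrability exponent: the relevant fact is that $\omega_t^{(p)}$ is strongly $L^{4n+2+\eps}$ modulo its projective kernel (the finite centre of $Sp_{2n}(\QQ_p)$), uniformly in $t$ and $p$; this is Proposition 6.4 of \cite{HoMo} (see also \cite{Howe3}). It does not follow from Proposition~\ref{Pro-DecayRepAlgGr}, which only yields some unspecified $r$ depending on the dimension, and your exponent $2n+2+\eps$ is not what these sources give. Second, the conversion into an operator-norm bound: Lemma~\ref{Lem-Herz} is not the tool here (it converts weak containment of the trivial representation in a sum of induced representations into weak containment in quasi-regular ones). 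What is needed is Proposition~\ref{Pro-LpMatrCoeff} applied to $\omega_\chi\otimes\overline{\omega_\chi}$, whose $(n+1)$-st tensor power is square-integrable modulo the finite projective kernel and hence contained in a multiple of the regular representation (and so, after restriction to the lattice, in a multiple of $\la_\Ga$), combined with the inequality $\Vert\pi(\nu)\Vert\le\Vert(\pi\otimes\overline{\pi})^{\otimes k}(\nu)\Vert^{1/2k}$ of Proposition 30 in \cite{BachirYves}. The conjugate factor is essential: it cancels the metaplectic cocycle so that one lands on a genuine representation, and it is what produces the exponent $1/(2n+2)$; a bare statement that ``a suitable tensor power of $\omega_\chi$ is weakly contained in $\la_\Ga$'' does not by itself give $\Vert\omega_\chi(\nu)\Vert\le\Vert\la_\Ga(\nu)\Vert^{1/(2n+2)}$. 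A minor further point: the isotypical components carry a finite-dimensional projective $\Ga$-action $W_\chi$ on the multiplicity space, not merely a multiplicity $m_\chi$; this is harmless for the estimate, since tensoring with $W_\chi\otimes\overline{W_\chi}$ preserves the $L^p$ condition, but it should be accounted for.
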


 \section{Extension of representations}
\label{S-Extension}
Let  $G$ be a  locally compact  group which we assume to be second countable.
We will need the notion of a projective  representation.
   Recall that a  mapping $\pi: G \to U(\H)$
from $G$ to the unitary group   of the Hilbert space $\H$
is  a \textbf{projective representation} of $G$
if the following holds:
\begin{itemize}
 \item $\pi(e)=I$,
\item for all $g_1,g_2\in G,$ there exists  $c(g_1 , g_2 )\in\mathbf C $
such that  $$\pi(g_1 g_2 ) = c(g_1 , g_2 )\pi(g_1 )\pi(g_2 ),$$
\item the function $g\mapsto \langle\pi(g)\xi,\eta\rangle$ is measurable for all
$\xi,\eta\in\H.$
\end{itemize}
The mapping  $c:G \times G \to {\mathbf S}^1$ is
a $2$-cocycle
with values in the unit cercle ${\mathbf S}^1.$
Every projective unitary representation of $G$
 can be  lifted to 
an ordinary unitary representation of a central extension
of $G $ (for all this, see \cite{Mackey-Livre} or \cite{Mackey}).

Let $N$ be a closed normal subgroup of $G$. Let
 $\pi$ be an irreducible unitary representation of $N$
 on a Hilbert space $\H.$
Consider the stabilizer 
$$
G_{\pi}=\{ g\in G\mid \pi^g   \text{ is equivalent to } \pi\}
$$
of $\pi$ in $G$ for the natural action of $G$ on  the unitary dual $\widehat{N}$ 
given by $\pi^g(n)= \pi(g^{-1}n g).$
Then $G_\pi$ is a closed subgroup of $G$
containing $N.$
The following lemma  is a well-known  part of Mackey's theory
of unitary representations of group extensions. 
\begin{lemma}
\label{Lem-ExtRep}
Let $\pi$ be an irreducible unitary representation of $N$ on 
the Hilbert space $\H.$  There exists a projective unitary representation $\widetilde\pi$
of ${G}_{\pi}$ on $\H$ which extends $\pi$. 
Moreover, $ \widetilde\pi$ is unique, up to scalars: 
any other projective unitary representation $\widetilde\pi '$ of ${ G}_\pi$ 
extending $\pi$ is  the form $\widetilde\pi '=\la\widetilde\pi$
 for a measurable function $\la: G_{\pi}\to {\mathbf S}^1.$
\end{lemma}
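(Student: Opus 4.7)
\medskip
\noindent
\textbf{Proof proposal for Lemma~\ref{Lem-ExtRep}.}

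The plan is to use Schur's lemma together with a measurable cross-section argument. Existence proceeds by first producing a \emph{pointwise} intertwiner. For each fixed $g\in G_\pi$ the definition of $G_\pi$ gives $\pi^g\simeq \pi,$ which is easily translated into the existence of a unitary operator $T(g)$ on $\H$ satisfying
$$
T(g)\,\pi(n)\,T(g)^{-1}=\pi(gng^{-1}) \tous n\in N.
$$
Because $\pi$ is irreducible, Schur's lemma asserts that $T(g)$ is unique up to multiplication by a scalar of modulus one. For $n\in N$ the operator $\pi(n)$ itself satisfies this relation, so we may (and will) require $T(n)=\pi(n)$ on $N$. To obtain an actual map, I would invoke a Borel measurable selection theorem (Mackey's cross-section theorem for Polish groups, or the Kuratowski--Ryll-Nardzewski selection theorem applied to the Borel subset of $G_\pi\times U(\H)$ defined by the above intertwining relation, $U(\H)$ being endowed with the strong operator topology). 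This produces a Borel map $g\mapsto \widetilde\pi(g):=T(g)$ agreeing with $\pi$ on $N$.

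Next I would check the projective representation property. For $g_1,g_2\in G_\pi$ both $\widetilde\pi(g_1)\widetilde\pi(g_2)$ and $\widetilde\pi(g_1g_2)$ are unitary operators on $\H$ satisfying
$$
X\,\pi(n)\,X^{-1}=\pi(g_1g_2ng_2^{-1}g_1^{-1}) \tous n\in N,
$$
so Schur's lemma yields a scalar $c(g_1,g_2)\in {\mathbf S}^1$ with $\widetilde\pi(g_1)\widetilde\pi(g_2)=c(g_1,g_2)\widetilde\pi(g_1g_2).$ The measurability of the matrix coefficients of $\widetilde\pi$ forces $c$ to be a Borel $2$-cocycle; together with $\widetilde\pi(e)=I$ this confirms that $\widetilde\pi$ is a projective unitary representation of $G_\pi$ extending $\pi$.

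For uniqueness, suppose $\widetilde\pi'$ is another projective unitary representation of $G_\pi$ on $\H$ extending $\pi$. Writing $gn=(gng^{-1})g$ in $G_\pi$ and applying the projective cocycle identity, using that $\widetilde\pi'|_N=\pi,$ one deduces that $\widetilde\pi'(g)$ also intertwines $\pi$ with $n\mapsto \pi(gng^{-1})$ (possibly after absorbing a measurable scalar), so that after such a harmless modification one has
$$
\widetilde\pi'(g)\,\pi(n)\,\widetilde\pi'(g)^{-1}=\pi(gng^{-1}) \tous n\in N,\ g\in G_\pi.
$$
Then the operator $A(g):=\widetilde\pi'(g)\widetilde\pi(g)^{-1}$ commutes with every $\pi(n),$ hence is scalar by Schur's lemma applied to the irreducible $\pi$. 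Writing $A(g)=\la(g) I$ with $\la(g)\in {\mathbf S}^1,$ one obtains $\widetilde\pi'=\la\widetilde\pi;$ measurability of $\la$ follows from the measurability of $\widetilde\pi$ and $\widetilde\pi'.$

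The main obstacle I anticipate is the measurable selection in the existence step: one must choose the intertwiners $\widetilde\pi(g)$ in a jointly Borel manner in $g$ so that the resulting cocycle $c$ is measurable. This is standard but requires the second countability of $G$ and the separability of $\H$ to invoke the Borel structure on the space of unitary operators and on the unitary dual of $N$. Once this is in place, the rest of the argument is a direct application of Schur's lemma.
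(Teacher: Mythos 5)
Your proposal follows the same route as the paper's proof, which simply quotes Mackey's Theorem 8.2 for the existence of a measurable choice of intertwiners and Schur's lemma for everything else. The existence half is fine and fills in exactly the detail the paper delegates to Mackey: pointwise intertwiners exist by definition of $G_\pi$, are unique up to ${\mathbf S}^1$ by Schur's lemma, a Borel selection (available because $G$ is second countable and $\H$ separable) gives the map $\widetilde\pi$, and the cocycle identity then follows from Schur's lemma as you say.

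There is, however, a genuine gap in your uniqueness step, concentrated in the parenthetical ``possibly after absorbing a measurable scalar.'' For an arbitrary projective representation $\widetilde\pi'$ with $\widetilde\pi'|_N=\pi$, the identity $gn=(gng^{-1})g$ only yields
$$
\widetilde\pi'(g)\,\pi(n)\,\widetilde\pi'(g)^{-1}=\mu_g(n)\,\pi(gng^{-1}),\qquad \mu_g(n)=c'(g,n)\,c'(gng^{-1},g)^{-1},
$$
and since both sides are genuine representations of $N$ in the variable $n$, the error term $\mu_g$ is a unitary character of $N$. Multiplying $\widetilde\pi'(g)$ by a scalar does not change the left-hand side at all, so no ``harmless modification'' can remove $\mu_g$; and the discrepancy can really occur: for $G=H_3(\RRR)\times\RRR$, $N=H_3(\RRR)$ and $\pi$ the Stone--von Neumann representation, both $\widetilde\pi(h,t)=\pi(h)$ and $\widetilde\pi'(h,t)=\pi(h\exp(tX))$ (with $X$ non-central) are measurable projective representations of $G_\pi=G$ extending $\pi$, yet they do not differ by a scalar-valued function. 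So uniqueness up to scalars holds only among extensions satisfying the covariance relation $\widetilde\pi(g)\pi(n)\widetilde\pi(g)^{-1}=\pi(gng^{-1})$; under that hypothesis your closing argument (that $A(g)=\widetilde\pi'(g)\widetilde\pi(g)^{-1}$ commutes with $\pi(N)$ and is therefore scalar) is correct and is exactly the paper's. In fairness, the paper's one-line ``uniqueness follows from Schur's lemma'' elides the same point, and the covariance relation is what is actually used in Proposition~\ref{Pro-KoopmanDec}; the clean fix is to build that relation into the meaning of ``extends'' rather than to try to restore it by rescaling.
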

\begin{proof}
For every $g\in {G}_\pi$,  there exists a unitary
operator $\widetilde\pi(g)$ on $\H$ such that
$$
\pi (g(n))= \widetilde\pi(g) \pi(n) \widetilde\pi(g)^{-1} \tout n\in N.
$$
One can choose  $\widetilde\pi(g)$  
such that $g\mapsto \widetilde\pi(g)$ is a projective 
unitary representation of ${ G}_\pi$ which extends $\pi$ (see Theorem~8.2 in \cite{Mackey}).
The uniqueness of  $\pi$ follows from the irreducibility of $\pi$ and Schur's lemma. $\bsq$
\end{proof} 
 
\section{A weak containment result for induced representations }
\label{InducedRep}
Let $G$ be a locally compact group with Haar measure $\mu_G.$
Recall that a unitary representation $(\rho, \K)$ of $G$ 
is weakly contained in another unitary representation $(\pi, \H)$ of $G,$ 
if every matrix coefficient $g\mapsto \langle \rho(g)\eta\mid \eta\rangle$
of $\pi$ (for $\eta \in \K$) is the limit, uniformly of compact subsets of $G,$ of a finite sum of
matrix coefficients of $\pi.$
Equivalently, if 
$\Vert \rho(f)\Vert \leq \Vert \pi(f)\Vert$ for every $f\in C_c(G),$
where $C_c(G)$ is the space of continuous functions with compact support on $G$
and where the operator $\pi(f)\in \B(\H)$ is defined
by  the integral 
$$
\pi(f) \xi= \int_G f(g)\pi(g) d\mu_G(g) \tout \xi\in \H.
$$
The trivial  representation $1_G$ is weakly contained in $\pi$
if and only if there exists, for every compact subset $Q$ of $G$
and every $\varepsilon>0,$ there exists a unit vector $\xi\in \H$ 
which is $(Q,\varepsilon)$-invariant, that is such that
$$\sup_{g\in Q} \Vert \pi(g)\xi- \xi\Vert\leq \eps.$$

Let $H$ be a closed subgroup of $G.$
We will always assume that the coset space $H\bs G$
admits  a   non-zero $G$-invariant (possibly infinite) measure   on its Borel subsets.
Let $(\sigma,\K)$ be a unitary representation
of $H.$ We will use the following model
for the induced representation $\pi:=\ind_H^G \sigma$.
Choose a Borel fundamental domain $X\subset G$ for 
the action of $G$ on   $H\bs G$.
For $x \in X$ and $g\in G,$ let $x\cdot g\in X$ and $c(x,g)\in H$ 
be  defined by 
$$xg= c(x,g) (x\cdot g).$$
There exists a non-zero
$G$-invariant  measure on $X$ for the action $(x,g)\mapsto x\cdot g$ of $G$ on $X.$
The Hilbert space of $\pi$ is 
the space $L^2(X, \K, \mu)$  of 
all square-integrable measurable mappings $\xi: X\to \K$ and the 
 action of $G$ on $L^2(X, \K, \mu)$ is given by
$$
(\pi(g) \xi)(x)= \sigma (c(x,g))(\xi(x\cdot g)),\qquad g\in G,\ \xi\in L^2(X, \K, \mu),\ x\in X.
$$ 
Observe that, in the case where $\sigma$ is the trivial representation $1_H,$
the induced representation $\ind_H^G1_H$ is equivalent to  
\textbf{quasi-regular representation} $\lambda_{H\bs G},$
that is the natural representation of $G$ on $L^2(H\bs G,\mu)$ given by right translations.

We will use several times the following elementary but crucial lemma,
which can be viewed as a generalization of Herz's  majoration principle (see Proposition 17 in \cite{BachirYves}).
\begin{lemma}
\label{Lem-Herz}
Let $(H_i)_{i\in I}$ be a family of closed subgroups 
of $G$ such that $H_i\bs G$
admits  a   non-zero $G$-invariant  measure.
Let  $(\sigma_i,\K_i)$  be a unitary representation
of $H_i.$ Assume that $1_G$ is weakly contained in 
the direct sum $\oplus_{i\in I} \Ind_{H_i}^G\sigma_i.$
Then $1_G$ is weakly contained in 
$\oplus_{i\in I} \lambda_{H_i\bs G}.$
\end{lemma}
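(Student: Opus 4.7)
The plan is to produce, from almost invariant vectors for $\pi := \oplus_{i\in I}\Ind_{H_i}^G \sigma_i$, corresponding almost invariant vectors for $\lambda := \oplus_{i\in I}\lambda_{H_i\bs G}$ by replacing each $\K_i$-valued section with its pointwise norm. The essential input is a fiberwise version of Herz's majoration principle applied to each summand.

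First I would establish, using the explicit Borel model for induction recalled above, the pointwise inequality
$$|\langle \Ind_{H_i}^G\sigma_i(g)\xi_i,\xi_i\rangle|\leq \langle \lambda_{H_i\bs G}(g)|\xi_i|,|\xi_i|\rangle \tout g\in G,\ \xi_i\in L^2(X_i,\K_i,\mu_i),$$
where $|\xi_i|(x):=\|\xi_i(x)\|_{\K_i}$ is identified, via the fundamental domain $X_i$, with an element of $L^2(H_i\bs G,\mu_i)$ of the same $L^2$-norm as $\xi_i$. This is obtained by inserting the formula $(\Ind_{H_i}^G\sigma_i(g)\xi_i)(x) = \sigma_i(c(x,g))(\xi_i(x\cdot g))$, using that $\sigma_i(c(x,g))$ is a unitary operator on $\K_i$, and then applying Cauchy--Schwarz in the fibre and in $L^2(X_i,\mu_i)$.

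Next, assuming $1_G\prec \pi$, fix a compact set $Q\subset G$ and $\eps>0$. Choose a unit vector $\xi=(\xi_i)_{i\in I}\in \bigoplus_{i\in I}L^2(X_i,\K_i,\mu_i)$ with $\mathrm{Re}\langle\pi(g)\xi,\xi\rangle > 1-\eps^2/2$ for every $g\in Q$. Then $|\xi|:=(|\xi_i|)_{i\in I}\in \bigoplus_{i\in I}L^2(H_i\bs G,\mu_i)$ is a unit vector, and summing the fiberwise inequality over $i\in I$ gives
\begin{align*}
\langle \lambda(g)|\xi|,|\xi|\rangle
&= \sum_{i\in I}\langle \lambda_{H_i\bs G}(g)|\xi_i|,|\xi_i|\rangle \\
&\geq \sum_{i\in I}|\langle \Ind_{H_i}^G\sigma_i(g)\xi_i,\xi_i\rangle| \\
&\geq \mathrm{Re}\langle\pi(g)\xi,\xi\rangle > 1-\eps^2/2.
\end{align*}
The left-hand side is a non-negative real number, so $\|\lambda(g)|\xi|-|\xi|\|^2 < \eps^2$ uniformly in $g\in Q$; hence $|\xi|$ is a $(Q,\eps)$-invariant unit vector for $\lambda$, and $1_G \prec \lambda$.

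I do not anticipate a serious obstacle: the only step requiring care is to verify that in the chosen Borel model for induction the map $\xi_i\mapsto|\xi_i|$ is an isometric embedding of $L^2(X_i,\K_i,\mu_i)$ into $L^2(H_i\bs G,\mu_i)$ which carries the $G$-action on moduli onto the quasi-regular representation $\lambda_{H_i\bs G}$. Both facts are immediate from the formulas defining $\Ind_{H_i}^G\sigma_i$ and $\lambda_{H_i\bs G}$; the rest of the argument is Cauchy--Schwarz together with bookkeeping over the index set $I$.
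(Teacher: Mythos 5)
Your proof is correct and follows essentially the same route as the paper: both arguments replace each section $\xi_i$ by its pointwise norm $x\mapsto\Vert\xi_i(x)\Vert$ in the Borel model for induction and exploit the unitarity of the cocycle $\sigma_i(c(x,g))$. The only (inessential) difference is that you transfer almost-invariance through the matrix-coefficient inequality $|\langle \Ind_{H_i}^G\sigma_i(g)\xi_i,\xi_i\rangle|\leq \langle \lambda_{H_i\bs G}(g)|\xi_i|,|\xi_i|\rangle$ obtained by Cauchy--Schwarz in the fibre, whereas the paper applies the reverse triangle inequality pointwise to get $\Vert \Ind_{H_i}^G\sigma_i(g)\xi_i-\xi_i\Vert\geq \Vert\lambda_{H_i\bs G}(g)|\xi_i|-|\xi_i|\Vert$ directly.
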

\begin{proof}
Let $Q$ be a compact subset of $G$ and $\eps.$
For every  $i\in I,$ let $X_i\subset G$
be a Borel fundamental domain for 
the action of $G$ on   $H_i\bs G$ and $\mu_i$
a    non-zero $G$-invariant  measure on $X_i.$
There exists a family  of vectors  $\xi_i\in L^2(X_i, \K_i, \mu_i)$ such that 
$ \sum_{i}\Vert \xi_i\Vert^2=1$ and
$$\sup_{g\in Q} \sum_{i}\Vert \Ind_{H_i}^G\sigma_i(g)\xi_i- \xi_i\Vert^2\leq \eps,$$

Define $\vfi_i$ in  $L^2(X_i, \mu_i)$
by $\vfi_i(x)=\Vert\xi_i(x)\Vert$.
Then $ \sum_{i}\Vert \vfi_i\Vert^2=1$ and, for every $g\in Q,$ we have
 \begin{align*}
\Vert \left(\Ind_{H_i}^G\sigma_i(g)\right)\xi_i- \xi_i\Vert^2&=
\int_{X_i} \Vert \sigma (c_i(x,g))(\xi_i(x\cdot_i g))- \xi_i(x)\Vert^2 d\mu_i(x)\\
&\geq \int_{X_i}\left\vert \Vert \sigma_i (c_i(x,g))(\xi_i(x\cdot_i g))\Vert  -\Vert \xi_i(x)\Vert \right\vert^2 d\mu_i(x)\\
&=\int_{X_i}\left| \Vert \xi_i(x\cdot_i g)\Vert  -\Vert \xi_i(x)\Vert \right|^2 d\mu_i(x)\\
&=\int_{X_i}\left| \vfi_i(x\cdot_i g)-\vfi(x) \right|^2 d\mu_i(x)\\
&=\Vert \lambda_{{H_i}\bs G}(g)\vfi_i- \vfi_i\Vert^2
\end{align*}
and the claim follows. $\bsq$
\end{proof}
\section{Decay of matrix coefficients of unitary representations}
\label{S:DecayMatrixCoeff}
We  recall a few general facts about  decay of matrix coefficients of  unitary representations,
Recall that  the  projective kernel of   a (genuine or projective) representation $\pi$ of the 
locally compact group $G$ is the closed normal subgroup $P_\pi$ of $G$ consisting
of the elements $g\in G$ such that $\pi(g)$ is a scalar multiple of the identity operator,
that is such that $\pi(g)=\lambda_\pi(g) I$ for some $\lambda_\pi(g)\in {\mathbf S}^1.$

  Observe also that, for $\xi, \eta\in\H,$ the absolute value
of the matrix coefficient
$$C^{\pi}_{\xi,\eta}: g\mapsto \langle \pi(g)\xi,\eta\rangle$$
is constant on cosets modulo $P_\pi.$  
For a real number  $p$ with $1\leq p <+\infty,$ the representation $\pi$ is said to be 
\textbf{strongly $L^p$  modulo $P_\pi$},
if there is dense subspace $D\subset \H$ 
such that $|C^{\pi}_{\xi,\eta}|\in L^p(G/P_\pi)$ for all $\xi,\eta\in D.$  
\begin{proposition}
\label{Pro-LpMatrCoeff}
Assume that the unitary representation  $\pi$  of the locally  compact group $G$
is strongly $L^p$ modulo $P_\pi$ for  $1\leq p <+\infty.$
Let $k$ be an  integer $k\geq p/2.$
 Then  the tensor power $\pi^{\otimes k} $ is contained in
 an infinite multiple of $\ind_{P_\pi}^G \lambda_\pi^k$,
 where $\lambda_\pi$ is the unitary character of $P_\pi$ associated to $\pi.$
\end{proposition}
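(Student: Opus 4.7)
The plan is to use the tensor factorization of matrix coefficients of $\pi^{\otimes k}$ together with Hölder's inequality to place them in $L^2(G/P_\pi)$, and then to use these coefficients as building blocks for a $G$-equivariant embedding of $\pi^{\otimes k}$ into a countable direct sum of copies of $\sigma:=\ind_{P_\pi}^G\la_\pi^k$. The starting point is the identity
$$
C^{\pi^{\otimes k}}_{v,w}(g)=\prod_{i=1}^{k}C^{\pi}_{\xi_i,\eta_i}(g),\qquad\text{where}\qquad v=\xi_1\otimes\cdots\otimes\xi_k,\ w=\eta_1\otimes\cdots\otimes\eta_k.
$$
For $\xi_i,\eta_i\in D$, each $|C^{\pi}_{\xi_i,\eta_i}|$ belongs to $L^p(G/P_\pi)$ by hypothesis and is bounded pointwise by $\|\xi_i\|\,\|\eta_i\|$; since $2k\geq p$, the elementary interpolation $|C|^{2k}\leq(\|\xi_i\|\,\|\eta_i\|)^{2k-p}|C|^p$ places it in $L^{2k}(G/P_\pi)$. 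Hölder's inequality applied to $k$ factors of exponent $2k$ then yields $|C^{\pi^{\otimes k}}_{v,w}|\in L^2(G/P_\pi)$.

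Next, since $\pi(p)=\la_\pi(p)I$ on $\H$ for $p\in P_\pi$, we have $\pi^{\otimes k}(p)=\la_\pi(p)^kI$ on $\H^{\otimes k}$, so every matrix coefficient $C$ of $\pi^{\otimes k}$ satisfies $C(pg)=\la_\pi^k(p)C(g)$. Combined with the previous step, this shows $C^{\pi^{\otimes k}}_{v,w}$ is an element of the Hilbert space of $\sigma$, realized as $L^2$-sections of the $\la_\pi^k$-line bundle over $G/P_\pi$. For each $w$ in the span of $D^{\otimes k}$ I would then introduce the densely defined linear map
$$
V_w\colon u\longmapsto\bigl(g\mapsto\langle\pi^{\otimes k}(g)u,w\rangle\bigr)
$$
from the span of $D^{\otimes k}$ in $\H^{\otimes k}$ into the space of $\sigma$, and verify the intertwining identity $V_w(\pi^{\otimes k}(g_0)u)(g)=V_w(u)(gg_0)$, which matches right translation and hence the action of $\sigma(g_0)$ in the model of induced representations fixed in Section~\ref{InducedRep}.

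To conclude $\pi^{\otimes k}\subset\infty\cdot\sigma$ I would invoke the covariant version of Godement's theorem: every continuous positive-definite function $\phi$ on $G$ satisfying $\phi(pg)=\la_\pi^k(p)\phi(g)$ for $p\in P_\pi$ and $|\phi|\in L^2(G/P_\pi)$ is of the form $\phi(g)=\langle\sigma(g)F,F\rangle$ for some $F$ in the space of $\sigma$, so the GNS representation attached to $\phi$ embeds isometrically into $\sigma$. Applying this to $\phi_v:=C^{\pi^{\otimes k}}_{v,v}$ for each pure tensor $v\in D^{\otimes k}$ shows that every cyclic subrepresentation of $\pi^{\otimes k}$ generated by such a $v$ embeds into $\sigma$. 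A standard Zorn decomposition of $\H^{\otimes k}$ into mutually orthogonal cyclic pieces generated by pure tensors from $D^{\otimes k}$ (possible because their span is dense) then realizes $\pi^{\otimes k}$ as a subrepresentation of a countable direct sum of copies of $\sigma$. The main obstacle is the covariant Godement step; a self-contained alternative is to close each $V_w$ and extract bounded $G$-intertwining partial isometries from its polar decomposition, then glue them, but this requires careful bookkeeping to produce an honest embedding into $\infty\cdot\sigma$.
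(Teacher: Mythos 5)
Your argument follows, in substance, the same route as the paper: the paper's proof consists of the single observation that $\pi^{\otimes k}$ is square-integrable modulo $P_\pi$ (your H\"older computation, which is correct, is exactly the content of that observation) followed by a citation of the general embedding theorem for representations that are square-integrable modulo their projective kernel (Proposition 4.2 of \cite{HoMo}, or Proposition 1.2.3 in Chapter V of \cite{HoTa}). That cited theorem is precisely the ``covariant Godement theorem'' you single out as the main obstacle, so what you are really doing is re-proving the reference rather than taking a different path. The interpolation $|C|^{2k}\leq(\Vert\xi_i\Vert\Vert\eta_i\Vert)^{2k-p}|C|^p$ and the $k$-fold H\"older step are fine, as is the equivariance computation identifying $C^{\pi^{\otimes k}}_{v,w}$ with a vector in the space of $\ind_{P_\pi}^G\lambda_\pi^k$ (modulo the usual bookkeeping about whether one lands on $\lambda_\pi^k$ or its conjugate, which depends on the slot of the inner product and the coset model and is harmless).

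The one step that does not work as stated is the final Zorn decomposition. If $\K$ is the closed sum of a maximal family of mutually orthogonal cyclic subspaces generated by pure tensors from $D^{\otimes k}$, the density of the span of $D^{\otimes k}$ only guarantees that some pure tensor has a nonzero component in $\K^\perp$; it does not produce a pure tensor lying \emph{in} $\K^\perp$, so maximality is not contradicted and the exhaustion argument stalls. The repair is the alternative you mention in your last sentence, and it is how \cite{HoMo} and \cite{HoTa} actually argue: for $w$ ranging over a countable subset of pure tensors from $D^{\otimes k}$ with dense span, one closes each densely defined intertwiner $V_w$, takes the partial isometry $U_w$ from its polar decomposition (an intertwiner whose initial space contains the cyclic subspace generated by $w$, since $V_w(u)=0$ identically forces $u$ orthogonal to that cyclic subspace), and forms the single bounded intertwiner $T=\oplus_n 2^{-n}U_{w_n}$ into a countable multiple of $\sigma$. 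This $T$ is injective because the cyclic subspaces of the $w_n$ span densely, and the polar decomposition of $T$ then yields an honest isometric embedding of $\pi^{\otimes k}$ into $\infty\cdot\sigma$. With that substitution your proof is complete; alternatively you may simply quote the references, as the paper does.
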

  
\begin{proof} 
Observe  $\sigma:=\pi^{\otimes k}$ is 
square-integrable modulo $P_\pi$ for every integer $k\geq p/2.$
It follows (see Proposition 4.2 in \cite{HoMo} or  Proposition~1.2.3 in Chapter V of \cite{HoTa})
that $\sigma$  is contained in
 an infinite multiple of $\ind_{P_\sigma}^G \lambda_\sigma=\ind_{P_\pi}^G \lambda_\pi^k$.
 $\bsq$
 
\end{proof}

\section{The Koopman representation of the automorphism group of a homogeneous space}
\label{S-Kopman}

We establish  a decomposition result for the Koopman representation of a group of automorphisms of
an $S$-adic compact nilmanifold. We will state the result in the general context of a
 compact homogeneous space.
 
Let $G$ be a  locally compact  group and $\La$ a lattice in $G.$ We assume that $\La$
is cocompact in $G.$
The homogeneous space $X:= G/\La$  carries a   probability
measure  $\mu$ on  the Borel subsets $X$ which is invariant by translations
with elements from $G.$
Every element from $$\Aut (X):=
\{\ga\in \Aut(G)\mid \ga(\La) \subset \La\}
$$
induces a Borel isomorphism of $X$, which  leaves $\nu$  invariant,
as follows from the uniqueness of $\nu$. 

Given a subgroup $\Ga$ of  $\Aut(X),$ the following crucial proposition gives a decomposition of 
the associated Koopman  $\Ga$ on $L^2(X,\mu)$ as direct sum of
certain induced representations of $\Ga$.

\begin{proposition}
\label{Pro-KoopmanDec}
Let $G$ be a  locally compact  group and $\La$ a cocompact lattice in $G,$
and let $\Ga$ be a countable subgroup of $\Aut(X)$ for $X:= G/\La$
Let $\kappa$ be the  Koopman representation of  $\Ga$ 
associated to the action $\Ga\curvearrowright X.$
There exists a family $(\pi_i)_{i\in I}$ of irreducible unitary representations of $G$
such that $\kappa$ is  equivalent to a direct sum
$$\bigoplus_{i\in I} \ind_{\Ga_i}^{\Ga}(\widetilde{\pi_i}|_{\Ga_i}\otimes W_i),$$
where  $ \widetilde{\pi}_i$ is an  irreducible projective representation 
 of the stabilizer  $G_i$ of $\pi_i$ in $\Aut(G)\ltimes G$ extending $\pi_i$,
 and where  $W_i$ is a finite dimensional  projective unitary representation
of $\Ga_i :=\Ga\cap G_i$.
 \end{proposition}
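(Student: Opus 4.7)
The plan is to apply Mackey's normal subgroup analysis to $G$ sitting as a normal subgroup inside the semidirect product $\widetilde G = \Aut(G) \ltimes G$. Because every $\ga \in \Ga$ preserves $\La$, the subgroup $\Ga \ltimes G \subset \widetilde G$ acts on $X = G/\La$ by measure-preserving transformations ($G$ by left translations and $\Ga$ via its action on $X$), and the combined Koopman representation $\widetilde\kappa$ of $\Ga \ltimes G$ on $L^2(X,\mu)$ simultaneously extends $\kappa$ and the quasi-regular representation $\lambda_{G/\La}$ of $G$.

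Since $\La$ is cocompact in $G$, the restriction $\widetilde\kappa|_G = \lambda_{G/\La}$ decomposes discretely into irreducible unitary representations of $G$ with finite multiplicities. For each $\pi \in \hG$ that appears, I would write the isotypic component as $\H_\pi \cong \K_\pi \otimes W_\pi$, where $\K_\pi$ carries $\pi$ and $W_\pi$ is the finite-dimensional multiplicity space. A direct computation using the commutation relations in $\widetilde G$ shows that $\widetilde\kappa(\ga) \H_\pi = \H_{\pi^\ga}$, where $\pi^\ga(g) = \pi(\ga^{-1}(g))$; so $\Ga$ permutes the isotypic components according to its action on $\hG$. Picking a set $\{\pi_i\}_{i\in I}$ of representatives for these $\Ga$-orbits, and setting $G_i \subset \widetilde G$ to be the stabilizer of $\pi_i$ and $\Ga_i := \Ga \cap G_i$, a routine Mackey-machine argument identifies the $\Ga$-invariant subspace $\bigoplus_{\ga \Ga_i \in \Ga/\Ga_i} \H_{\pi_i^\ga}$ with the induced representation $\Ind_{\Ga_i}^\Ga \rho_i$, where $\rho_i$ is the natural representation of $\Ga_i$ on $\H_{\pi_i}$.

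The last step is to identify $\rho_i$ as $\widetilde{\pi_i}|_{\Ga_i} \otimes W_i$. By Lemma~\ref{Lem-ExtRep} applied to the normal subgroup $G$ of $\widetilde G$, the irreducible representation $\pi_i$ extends to an (automatically irreducible) projective unitary representation $\widetilde{\pi_i}$ of $G_i$ on $\K_{\pi_i}$. For each $\ga \in \Ga_i$, both operators $\widetilde\kappa(\ga)$ and $\widetilde{\pi_i}(\ga) \otimes \mathrm{id}_{W_{\pi_i}}$ intertwine the $G$-action on $\H_{\pi_i} = \K_{\pi_i} \otimes W_{\pi_i}$ with its twist by the automorphism $\ga$; Schur's lemma then forces $\widetilde\kappa(\ga) = \widetilde{\pi_i}(\ga) \otimes W_i(\ga)$ for a uniquely determined unitary $W_i(\ga)$ on $W_{\pi_i}$. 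The resulting map $W_i \colon \Ga_i \to U(W_{\pi_i})$ is then a projective representation whose cocycle is the inverse of that of $\widetilde{\pi_i}|_{\Ga_i}$, so the tensor product $\widetilde{\pi_i}|_{\Ga_i} \otimes W_i$ is a genuine unitary representation of $\Ga_i$, and assembling the orbits yields the claimed decomposition.

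The main obstacle I expect is the bookkeeping in this final step: verifying Borel-measurability of $\ga \mapsto W_i(\ga)$, confirming that the projective cocycles of $\widetilde{\pi_i}|_{\Ga_i}$ and of $W_i$ really do cancel so that the tensor product is a true representation of $\Ga_i$, and noting that the scalar ambiguity in $\widetilde{\pi_i}$ granted by Lemma~\ref{Lem-ExtRep} is absorbed by a compensating rescaling of $W_i$, so that the isomorphism class of $\widetilde{\pi_i}|_{\Ga_i} \otimes W_i$ is well defined independently of the choices.
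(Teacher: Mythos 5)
Your proposal is correct and follows essentially the same route as the paper's proof: extend $\kappa$ to the Koopman representation of $\Ga\ltimes G$, use the discrete finite-multiplicity decomposition of $L^2(X,\mu)$ into $G$-isotypic components, observe that $\Ga$ permutes them according to its action on $\hG$, induce from the orbit stabilizers, and apply Schur's lemma together with Lemma~\ref{Lem-ExtRep} to split each stabilizer representation as $\widetilde{\pi_i}|_{\Ga_i}\otimes W_i$. The bookkeeping issues you flag at the end (measurability of $W_i$, cancellation of cocycles, absorption of the scalar ambiguity) are exactly the points the paper handles implicitly, so there is no gap.
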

 \begin{proof}
 We extend $\kappa$ to a  unitary representation, again denoted by  $\kappa,$ of $\Ga\ltimes G$ 
 on $L^2(X,\mu)$ given by
$$
\kappa(\ga, g) \xi(x)=\xi(\ga^{-1}(g x))  \tout\ga\in\Ga, g\in G, \xi\in L^2(X,\mu), \ x\in X.
$$
Identifying $\Ga$ and $G$ with subgroups of $\Ga\ltimes G$,  we have 
$$
 \kappa({\ga^{-1}}) \kappa(g) \kappa({\ga})= \kappa({\ga^{-1}(g)}) \tout \ga\in \Ga,\ n\in N.\leqno{(*)}
$$
Since $\La$ is cocompact in $G,$ we   can consider the decomposition of 
 $L^2(X,\mu)$  into  $G$--isotypical components:   we  have  (see Theorem in Chap. I, \S 3 of \cite{GGPS69})
$$
L^2(X,\mu)=\bigoplus_{\pi\in\Sigma} \H_{\pi},
$$
where $\Sigma$ is a certain set of  pairwise non-equivalent irreducible unitary representations of $G;$
for every $\pi\in\Sigma$, the space $\H_{\pi}$
is the union of the closed $\kappa(G)$-invariant subspaces $\K$
of $\H$ for which the corresponding representation of 
$G$ in ${\K}$   is  equivalent to $\pi$;
moreover, the multiplicity of every $\pi$ is finite, 
that is,  every $\H_{\pi}$ is a direct sum of finitely many irreducible unitary  representations of $G$.

Let $\ga$ be a fixed automorphism in $\Ga.$ 
Let $\kappa^\ga$ be the conjugate representation of $\kappa$ by $\ga,$ that is,
$ \kappa^\ga(g)=\kappa(\ga g\ga^{-1})$ for all $g\in \Ga\ltimes N$.
On the one hand, for every $\pi\in\Sigma,$  the isotypical component of  $\kappa^\ga\vert _{G}$ corresponding to $\pi$
is $\H_{\pi^{\ga^{-1}}}$.  
On the other hand,  relation $(*)$ shows that
$\kappa(\ga)$ is a unitary equivalence between
 $\kappa\vert_{G} $ and  $\kappa^\ga\vert _{G}.$ 
  It follows that 
 $$
\kappa({\ga}) (\H_{\pi}) = \H_{\pi^{\ga}}\tout \ga\in \Ga;
$$
so, $\Ga$ permutes the $\H_\pi$'s among themselves according to its action on $\widehat{G}.$

Write $\Sigma=\bigcup_{i\in I} \Sigma_i,$ where the  $\Sigma_i$'s are   the $\Ga$-orbits
 in $\Sigma,$ and set
$$
\H_{\Sigma_i}=\bigoplus_{\pi\in\Sigma_i} \H_{\pi}.
$$
Every $\H_{\Sigma_i}$ is invariant under $\Ga\ltimes G$
and we have  an orthogonal decomposition  
$$
\H= \bigoplus_{i} \H_{\Sigma_i}.
$$
Fix $i\in I.$ Choose  a representation
${\pi}_i $ in $\Sigma_i$ and set $\H_i= \H_{\pi_i}.$
Let  $\Ga_i$  denote the stabilizer of $\pi_i$  in $\Ga.$ 
The space  $\H_i$
is invariant under  $\Ga_i.$ 
Let $V_i$ be  the corresponding representation
of $ \Ga_i$  on $\H_i.$

Choose   a set $S_i$ of representatives for the cosets in 
$$\Ga/\Ga_i= (\Ga\ltimes G)/ (\Ga_i\ltimes G)$$
 with $e\in S_i.$
Then $\Sigma_i=\{ \pi_i^s\ :\ s\in S_i\}$ 
and the Hilbert space $\H_{\Sigma_i}$
is the sum of mutually orthogonal spaces:
$$
\H_{\Sigma_i}=
\bigoplus_{s\in S_i}\H_i^s.
$$
Moreover, $\H_i^s$
is the image under $\kappa (s)$
of $\H_i$
for every $s\in S_i.$ 
This  means  that  the restriction   $\kappa_i$ of $\kappa$
to  $\H_{\Sigma_i}$ of the Koopman representation $\kappa$  of $\Ga$  
is equivalent to the induced representation 
$\ind_{\Ga_i}^{\Ga} {V_i}.$ 

Since every $\H_{i}$ is a direct sum of finitely many irreducible unitary  representations
of $G,$ we can assume that
$\H_i$ is the tensor product 
$$
\H_i  =\K_i\otimes \L_i
$$
of the Hilbert space $\K_i$ of $\pi_i$ with
a finite dimensional Hilbert space $\L_i,$ 
in such a way that
$$
V_i(g)= \pi_{i}(g) \otimes I_{\L_i} \tout g\in G.\leqno{(**)}
$$
Let  $\ga\in \Ga_i.$ By $(*)$ and $(**)$ above,  we have 
 $$
 V_i(\ga) \left(\pi_{i}(g) \otimes I_{\L_i}\right)V_i(\ga) ^{-1}  = \pi_{i}(\ga g\ga^{-1}) \otimes I_{\L_i} \tout  \ g\in G.\leqno{(***)}
$$
On the other hand, let  $G_i$ be the stabilizer of $\pi_i$ in $\Aut(G)\ltimes G$; then $\pi_i$ 
extends to an  irreducible projective representation $ \widetilde{\pi}_i$
 of $G_i$  (see Section \ref{S-Extension}).
Since 
$$
 \widetilde{\pi_i}(\ga)  \pi_{i}(g)\ \widetilde{\pi_i}(\ga^{-1})= \pi_{i}(\ga g\ga^{-1}) \tout  g\in G, 
 $$
it follows from $(***)$  that 
  $\left(\widetilde{\pi_i}(\ga^{-1})\otimes I_{\L_i}\right)V_i(\ga)$
commutes with $\pi_i(g)\otimes I_{\L_i}$ for all $g\in \widetilde{G}_i.$ 
 As $\pi_i$ is irreducible,  there exists a unitary operator
$W_i(\ga)$ on $\L_i$ such that 
$$
V_i(\ga)= \widetilde{\pi_i}(\ga)\otimes W_i(\ga).
$$
 It is clear that $W_i$ is a projective unitary representation
of $\Ga_i\ltimes G$,  since  $V_i$  is a unitary representation
of $\Ga_i\ltimes G$.$\bsq$
\end{proof}

\section{Unitary dual  of solenoids}
\label{UniDualSolenoid}
Let $p$ be either a prime integer or $p=\infty.$
Define an element $e_p$ in the unitary dual  group $\widehat{\QQ_p}$ of the additive group of $\QQ_p$
by  $e_p(x)= e^{2\pi i x}$ if $p=\infty$ 
and $e_p(x)= \exp(2\pi i  \{x\}),$ where $\{x\}= \sum_{j=m}^{-1} a_j p^j $ denotes the ``fractional part" of a 
 $p$-adic number $x= \sum_{j=m}^\infty a_j p^j$ for integers $m\in \ZZ$ and $a_j \in \{0, \dots, p-1\}$.
 Observe that  $\Ker (e_p)=\ZZ$ in case $p=\infty$ and 
 that $\Ker (e_p)=\ZZ_p$ in case $p$ is a prime integer, where $\ZZ_p$ is the ring of $p$-adic integers.
 The map 
 $$ \QQ_p \to \widehat{\QQ_p}, \qquad y\mapsto (x\mapsto e_p(xy))$$
 is an isomorphism of topological groups  (see \cite[Section D.4]{BHV}).

Fix an integer $d\geq 1.$
Then $\widehat{\QQ_p^d}$ will be identified with $\QQ_p^d$
by means of the map
$$\QQ^d \to \widehat{\QQ_p^d}, \qquad y\mapsto x\mapsto e_p( x\cdot y),$$
where $x\cdot y= \sum_{i=1}^d x_i y_i$ for $x=(x_1, \dots, x_d), 
y=(y_1, \dots, y_d)\in \QQ_p^d.$

Let $S= \{p_1, \dots, p_r, \infty\}$, where $p_1, \dots, p_r$ are integer primes.
For an integer  $d\geq 1$, consider the  \textbf{$S$-adic solenoid} 
$$\Sol=\QQ_S^d/\ZZ[1/p_1, \cdots, 1/p_r]^d,$$
where  $\ZZ[1/p_1, \cdots, 1/p_r]^d$ is embedded diagonally in 
$\QQ_S= \prod_{p\in S} \QQ_p.$
 
Then  $\widehat{\Sol}$ is identified  with the annihilator of $\ZZ[1/p_1, \cdots, 1/p_r]^d$ in
$\QQ_S^d,$ that is, with $\ZZ[1/p_1, \cdots, 1/p_r]^d$ embedded in 
$\QQ_S^d$ via the map
$$
\ZZ[1/p_1, \cdots, 1/p_r]^d\to \QQ_S^d, \qquad b\mapsto (b, -b \cdots,- b).
$$
Under this identification, the dual action of the automorphism group 
$$
\Aut(\QQ_S^d)\cong  GL_d(\RRR)\times GL(\QQ_{p_1})\times\cdots\times GL(\QQ_{p_r})
$$
on  $\widehat{\QQ_S^d}$  corresponds to the right action on  $\RRR^d\times\QQ_{p_1}^d\times\cdots\times \QQ_{p_r}^d$ given by 
$$
((g_\infty, g_1, \cdots, g_r), (a_\infty, a_1, \cdots, a_r))\mapsto  (g_\infty^t a_\infty , g_1^t a_1, \cdots, g_r^t a_r),
$$
where $(g,a)\mapsto ga$ is the usual (left) linear action of $GL_d(\mathbf{k})$ on $\mathbf{k}^d$ for a field $\mathbf{k}.$

\section{Unitary representations of unipotent groups}
\label{UniRepUnipotent}
Let $\mathbf{U}$ be a linear algebraic unipotent group defined over $\QQ$.
The Lie algebra $\mathfrak{u}$ is defined over $\QQ$ and 
the exponential map $\exp: \mathfrak{u}\to U$ is a bijective morphism of algebraic varieties.

Let $p$ be either a prime integer or $p=\infty.$  The irreducible unitary representations of   $U_p:=\mathbf{U}(\QQ_p)$ are parametrized by Kirillov's theory as follows.

The Lie algebra of $U_p$ is  $\mathfrak{u}_p=\mathfrak{u}(\QQ)\otimes_{\mathbf Q} \mathbf \QQ_p,$
where $\mathfrak{u}(\QQ)$ is the Lie algebra over $\QQ$ consisting of the $\QQ$-points in 
$\mathfrak u.$

 Fix an element $f$ in the dual space ${\mathfrak u}_p^*= {\mathcal Hom}_{\QQ_p}({\mathfrak u}_p, \QQ_p)$ of $\mathfrak{u}_p.$
There exists a polarization $\mathfrak m$ for $f,$ that is, a Lie subalgebra $\mathfrak m$ 
of ${\mathfrak u}_p$ such that $f([{\mathfrak m},{\mathfrak m}])=0$ and which is of maximal dimension.
The induced representation  
${\rm Ind}_M^{U_p} \chi_f$ is irreducible, where $M=\exp(\mathfrak m)$ and 
$\chi_f$ is the unitary character of $M$ defined by
$$\chi_f(\exp X)=e_p(f(X)) \tout X \in {\mathfrak m},$$
where $e_p\in \widehat{\QQ_p}$ is  as in Section~\ref{UniDualSolenoid}.
The unitary equivalence class of ${\rm Ind}_M^{U_p} \chi_f$  only depends  on the co-adjoint orbit
${\rm Ad}^* (U_p) f$ of $f$. The map
$$
{\mathfrak n}_p^*/{\rm Ad}^* (U_p)\to \widehat{U_p},\qquad {\mathcal O}\mapsto \pi_{\mathcal O}
$$
called the Kirillov map, 
from the orbit space ${\mathfrak u}_p^*/{\rm Ad}^*(U_p)$ of the co-adjoint representation
to  the   unitary dual $\widehat{U}_p$ of $U_p$, is  a bijection.
In particular, $U_p$ is a so-called type I locally compact group.
For all of this, see \cite{Kirillov} or \cite{CoGr} in the case $p=\infty$ and \cite{Moore}
in the case  of a prime integer $p$.

The group $\Aut (U_p)$ of continuous automorphisms of $U$ 
can be identified with
the group of $\QQ_p$-points of the algebraic group
$\Aut (\mathfrak u)$ of automorphisms of the Lie algebra
$\mathfrak u$  of $\mathbf{U}.$
Notice also that the natural action of $\Aut(U_p)$  on ${\mathfrak u}_p$ as well as its dual action
on ${\mathfrak u}_p^*$ are algebraic.

Let  $\pi\in \widehat{U_p}$ with corresponding Kirillov orbit $\mathcal{O}_\pi$ and $g\in \Aut(U_p).$
Then $g(\mathcal{O}_\pi)$ is the Kirillov orbit associated to the conjugate representation $\pi^g.$

 \begin{lemma}
\label{Lem-RationalStab}
 Let $\pi$ be an irreducible unitary representation of $U_p.$ 
 The stabilizer $G_\pi$ of $\pi$  in $\Aut(U_p)$  is an algebraic subgroup  of $\Aut(U_p).$
 \end{lemma}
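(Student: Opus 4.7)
The plan is to translate the stabilizer condition on $\pi$ into a stabilizer condition on its Kirillov orbit, and then exploit the fact that this latter condition is cut out by algebraic equations.

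By the Kirillov correspondence recalled above, two irreducible representations $\pi,\pi^g\in\widehat{U_p}$ are equivalent if and only if their coadjoint orbits coincide, and we already noted that $g(\mathcal{O}_\pi)=\mathcal{O}_{\pi^g}$. Consequently
$$G_\pi=\{g\in\Aut(U_p)\mid g(\mathcal{O}_\pi)=\mathcal{O}_\pi\}=\{g\in\Aut(U_p)\mid g\cdot f\in \mathcal{O}_\pi\},$$
where $f\in \mathcal{O}_\pi$ is any fixed point (the second equality uses that the $\Aut(U_p)$-action on $\mathfrak{u}_p^*$ permutes the coadjoint orbits).

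The key geometric input is that the coadjoint orbit $\mathcal{O}_\pi=\Ad^*(U_p)f$ is Zariski closed in $\mathfrak{u}_p^*$. Indeed, the coadjoint action of the unipotent $\QQ$-group $\mathbf{U}$ on the affine variety $\mathfrak{u}^*$ is a regular action, and by Rosenlicht's theorem every orbit of a unipotent algebraic group on an affine variety is Zariski closed. Moreover, since $\mathbf{U}$ is unipotent in characteristic zero, $H^1(\QQ_p,\mathbf{H})$ vanishes for the stabilizer $\mathbf{H}$ of $f$ (as $\mathbf{H}$ has a composition series with $\mathbb{G}_a$-subquotients), so the $\QQ_p$-points of the geometric orbit coincide with $\Ad^*(U_p)f=\mathcal{O}_\pi$. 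Hence $\mathcal{O}_\pi$ is a Zariski-closed subset of $\mathfrak{u}_p^*$.

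Finally, since the action of $\Aut(\mathbf{U})$ on $\mathfrak{u}^*$ is algebraic, the evaluation map $g\mapsto g\cdot f$ is a $\QQ_p$-morphism from $\Aut(\mathbf{U})$ to $\mathfrak{u}^*$, and $G_\pi$ is its preimage of the Zariski-closed subset $\mathcal{O}_\pi$. Being also a subgroup of $\Aut(U_p)$, it is therefore an algebraic subgroup, as required. The only delicate step is the second one: one must be careful to distinguish the geometric coadjoint orbit from its set of $\QQ_p$-points, and the rationality conclusion hinges on Rosenlicht's theorem together with the vanishing of Galois cohomology of unipotent groups in characteristic zero.
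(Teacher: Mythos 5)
Your proof is correct and follows essentially the same route as the paper: identify $G_\pi$ with the stabilizer of the Kirillov orbit $\mathcal{O}_\pi$ and observe that this condition is algebraic because $\mathcal{O}_\pi$ is an algebraic subvariety of $\mathfrak{u}_p^*$. The only difference is that you justify the closedness of the orbit (via the Kostant--Rosenlicht theorem and the vanishing of $H^1$ for unipotent groups in characteristic zero, to match the geometric orbit with its $\QQ_p$-points), whereas the paper simply asserts that $\mathcal{O}_\pi$ is an algebraic subvariety.
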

 \begin{proof}
 Let ${\mathcal O}_\pi\subset {\mathfrak u}_p^*$ be the 
 Kirillov orbit corresponding to $\pi.$ Then 
 $G_\pi$ is the set of $g\in \Aut(U_p)$ such that 
 $g(\mathcal{O}_\pi)= \mathcal{O}_\pi.$
 As $\mathcal{O}_\pi$ is an algebraic subvariety of  ${\mathfrak u}_p^*$,
 the claim follows.
  $\bsq$
 \end{proof}
\section{Decay of matrix coefficients of unitary representations of $S$-adic groups}
Let $p$ be an integer prime or  $p=\infty$ and let
$\mathbf{U}$ be a linear algebraic unipotent group defined over $\QQ_p$.
Set $U_p:=\mathbf{U}(\QQ_p).$

Let $\pi$ be an irreducible unitary representation of
$U_p$. Recall (see Lemma~\ref{Lem-RationalStab})
 that the stabilizer $G_\pi$ of $\pi$  in $\Aut(U_p)$  is an algebraic subgroup  of $\Aut(U_p).$
 Recall  also (see Lemma~\ref{Lem-ExtRep}) that $\pi$ extends to a projective representation of $G_\pi$.
 The following result was proved in Proposition 22 of \cite{BachirYves}
 in the case where $p=\infty,$ using arguments from \cite{HoMo}.
 The proof in the case  where $p$ is a prime integer is  along similar lines
 and will be omitted.
\begin{proposition}
\label{Pro-DecayRepAlgGr} 
Let $\pi$ be an irreducible unitary representation of
$U_p$ and let $\widetilde{\pi}$
be a projective unitary representation  of ${G}_{\pi}$  which extends ${\pi}.$
There exists a real number  $r\geq 1,$ only depending  on the
dimension of $G_\pi,$ such that $\widetilde{\pi}$  is strongly $L^r$  modulo  its projective
kernel.
\end{proposition}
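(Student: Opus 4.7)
The plan is to transfer the argument from Proposition 22 of \cite{BachirYves} (which handled $p=\infty$) to the $p$-adic case. All of the structural ingredients---Kirillov's orbit correspondence, the existence of a rational polarization, the algebraicity of the coadjoint orbit and of $G_\pi$---are available over $\QQ_p$ by \cite{Moore} and Lemma~\ref{Lem-RationalStab}, so the architecture of the argument carries over; only the analytic estimates must be reformulated in ultrametric terms. After passing, if necessary, to a subgroup of finite index of $G_\pi$ stabilizing a fixed polarization (which does not affect the $L^r$ conclusion), one may assume that $\widetilde\pi$ has a canonical model tied to a single Kirillov datum.

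Concretely, I would first fix $f\in{\mathfrak u}_p^*$ with ${\mathcal O}_\pi={\rm Ad}^*(U_p)f$ and a polarization $\mathfrak m$ at $f,$ giving $\pi\cong\Ind_M^{U_p}\chi_f$ on $L^2(U_p/M),$ which via a rational cross-section is identified with $L^2(\QQ_p^k)$ for $k=\dim U_p-\dim M.$ The space of Schwartz--Bruhat functions on $\QQ_p^k$ furnishes the dense subspace $D$ needed in the definition of ``strongly $L^r$''. Using Lemma~\ref{Lem-ExtRep}, one then writes, for $g\in G_\pi,$ the operator $\widetilde\pi(g)$ as the composition of a rational change of variable induced by the action of $g$ on $U_p/M,$ the appropriate Jacobian factor, and multiplication by an additive character of the form $x\mapsto e_p(\psi_g(x))$ whose phase $\psi_g$ is polynomial in $x$ and rational in $g.$ The projective kernel $P_{\widetilde\pi}$ corresponds exactly to those $g$ which act trivially on ${\mathcal O}_\pi$ (equivalently, for which change of variable, Jacobian, and phase all collapse to a constant), so that $G_\pi/P_{\widetilde\pi}$ acts effectively and algebraically on ${\mathcal O}_\pi.$

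The analytic heart of the proof is then to estimate, for $\xi,\eta\in D,$ the oscillatory integral
$$
C^{\widetilde\pi}_{\xi,\eta}(g)=\int_{\QQ_p^k}e_p(\psi_g(x))\,\xi(g^{-1}\cdot x)\,\overline{\eta(x)}\,j_g(x)^{1/2}\,dx
$$
in terms of how non-degenerate the phase $\psi_g$ is. Because $G_\pi$ is a $\QQ_p$-algebraic subgroup of $\Aut(U_p)$ and because the action on ${\mathfrak u}_p^*$ is algebraic, the super-level set $\{g\in G_\pi/P_{\widetilde\pi}:|C^{\widetilde\pi}_{\xi,\eta}(g)|\geq\eps\}$ is contained in a $\QQ_p$-algebraic constructible set whose Haar measure is controlled by an Igusa-type bound $\eps^c$ with $c>0$ depending only on $\dim G_\pi$. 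Integrating in $\eps$ yields the desired $L^r$ bound with $r$ a function of $\dim G_\pi.$ The relevant Igusa-type estimates are precisely those developed in \cite{HoMo}, which are stated in a local-field generality covering all $\QQ_p$ uniformly.

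The principal obstacle, as compared with the real case, is this last step: in the archimedean setting one derives decay from explicit one-parameter subgroups of the algebraic normalizer and from classical oscillatory integral estimates, whereas over $\QQ_p$ these must be replaced by ultrametric filtrations and the local constancy of $e_p.$ The necessary machinery is already contained in \cite{HoMo}, so the work reduces to verifying that the polarization $\mathfrak m$, the cross-section identifying $U_p/M$ with $\QQ_p^k,$ and the cocycle defining $\widetilde\pi$ can all be chosen to be defined over $\QQ_p$ in a manner compatible with the $G_\pi$-action---which is standard for unipotent groups over local fields by \cite{Moore}.
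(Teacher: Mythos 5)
The paper itself gives no proof here: it defers entirely to Proposition~22 of \cite{BachirYves} and asserts that the non-archimedean case is ``along similar lines''. Your proposal tries to supply those lines, which is the right instinct, but the sketch you give diverges from the actual mechanism of \cite{BachirYves}/\cite{HoMo} and contains two genuine gaps. First, the reduction ``after passing to a subgroup of finite index of $G_\pi$ stabilizing a fixed polarization'' is not available: the stabilizer of a polarization $\mathfrak m$ in $G_\pi$ is in general a proper algebraic subgroup of \emph{infinite} index (for the Heisenberg group it is the Siegel parabolic of $Sp_{2n}(\QQ_p)$, whose quotient is a positive-dimensional flag variety). Consequently your concrete model for $\widetilde\pi(g)$ --- a rational change of variable on $U_p/M$, a Jacobian, and multiplication by $e_p(\psi_g(x))$ --- is valid only for $g$ in that parabolic-type subgroup. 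For general $g\in G_\pi$ the operator $\widetilde\pi(g)$ is an intertwiner between two different polarized models, i.e.\ a partial Fourier transform given by a higher-dimensional oscillatory kernel (already for $\omega_t^{(p)}$ evaluated at the Weyl element $J$ one gets the full Fourier transform), and it is exactly in these directions that the decay producing the $L^r$ bound occurs. Your matrix-coefficient formula therefore misses the analytic heart of the problem rather than isolating it.

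Second, the step you lean on --- that the super-level set $\{g : |C^{\widetilde\pi}_{\xi,\eta}(g)|\geq\eps\}$ sits inside a constructible set whose Haar measure obeys an Igusa-type bound, with the estimates ``precisely those developed in \cite{HoMo}'' --- is both unjustified and misattributed. Matrix coefficients are not polynomial functions of $g$, so their super-level sets are not algebraic, and no argument is offered for the containment; moreover the stated bound $\eps^{c}$ with $c>0$ goes the wrong way (the super-level set grows as $\eps\to 0$; one needs $\mathrm{vol}\leq C\eps^{-c}$ together with finiteness, which presupposes the qualitative vanishing theorem). More importantly, \cite{HoMo} contains no Igusa-type sublevel-set estimates: their quantitative results (e.g.\ that the oscillator representation is strongly $L^{4n+2+\eps}$, Proposition~6.4 there) are obtained structurally, via reduction along normal algebraic subgroups, the Mautner phenomenon, and explicit polynomial decay of coefficients of Schwartz--Bruhat vectors along a maximal split torus in a Cartan decomposition, then comparison with the Haar density. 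That is also the architecture of Proposition~22 of \cite{BachirYves}, which the present paper invokes. An Igusa/stationary-phase route over $\QQ_p$ might conceivably be made to work, but it would be a different proof requiring substantial new input, not a transfer of existing estimates.
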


We will  need later a  precise description of the projective kernel of  a representation $\widetilde{\pi}$ as above.
\begin{lemma}
\label{Lem-ProjKernelInduced}
 Let $\pi$ be an irreducible unitary representation of $U_p$ and 
  $\widetilde{\pi}$
 a projective unitary representation  of ${G}_{\pi}$  which extends ${\pi}.$
 Let ${\mathcal O}_\pi\subset {\mathfrak u}_p^*$ be the corresponding 
 Kirillov orbit of $\pi.$ For $g\in \Aut(U_p),$ the following properties are equivalent:
 \begin{itemize}
 \item [(i)] $g$ belongs to the projective kernel $P_{\widetilde{\pi}}$ of  $\widetilde{\pi};$ 
  \item[(ii)]  for every $u\in U_p,$ we have 
  $$g(u)u^{-1}\in \bigcap_{f\in {\mathcal O}_\pi}\exp (\Ker(f)).$$ 
  \end{itemize}
  \end{lemma}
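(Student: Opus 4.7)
The plan is to connect (i) and (ii) through the intermediate statement ``$g(u)u^{-1}\in\Ker\pi$ for every $u\in U_p$'', first reducing (i) to this condition by a Schur-type argument, and then identifying $\Ker\pi$ via Kirillov theory.

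For the first reduction I would argue as follows. By definition, $g\in P_{\widetilde\pi}$ means $\widetilde\pi(g)=\lambda(g)\,I$ for some $\lambda(g)\in{\mathbf S}^1$; since $\pi$ is irreducible, Schur's lemma makes this equivalent to $\widetilde\pi(g)$ commuting with every $\pi(u)$ for $u\in U_p$. Combined with the covariance identity $\widetilde\pi(g)\pi(u)\widetilde\pi(g)^{-1}=\pi(g(u))$ that characterises the extension $\widetilde\pi$ (and is valid for $g\in G_\pi$), this commutativity is equivalent to $\pi(g(u))=\pi(u)$ for every $u\in U_p$, i.e.\ to $g(u)u^{-1}\in\Ker\pi$ for every $u\in U_p$. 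Note that the latter condition, in turn, forces $\pi^g\simeq\pi$ and so automatically places $g$ in $G_\pi$, which is why the lemma may be stated for all $g\in\Aut(U_p)$.

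For the second step I would establish
$$
\Ker\pi \;=\; \bigcap_{f\in\mathcal{O}_\pi}\exp(\Ker f).
$$
Since $\exp:\mathfrak{u}_p\to U_p$ is a bijection, this amounts to $\log\Ker\pi=\mathfrak{k}(\mathcal{O}_\pi)$, where $\mathfrak{k}(\mathcal{O}_\pi):=\bigcap_{f\in\mathcal{O}_\pi}\Ker f$ is the annihilator in $\mathfrak{u}_p$ of the coadjoint orbit $\mathcal{O}_\pi$. Because $\mathcal{O}_\pi$ is $\Ad^*(U_p)$-stable, $\mathfrak{k}(\mathcal{O}_\pi)$ is $\Ad(U_p)$-invariant, hence an ideal of $\mathfrak{u}_p$. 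The inclusion $\log\Ker\pi\subset\mathfrak{k}(\mathcal{O}_\pi)$ would follow from the fact that $\pi$ factors through $U_p/\Ker\pi$, so that its Kirillov orbit must lie in the annihilator of $\log\Ker\pi$. For the reverse inclusion I would set $K:=\exp(\mathfrak{k}(\mathcal{O}_\pi))$ and observe that $\mathcal{O}_\pi$ is contained in $\mathfrak{k}(\mathcal{O}_\pi)^\perp\cong(\mathfrak{u}_p/\mathfrak{k}(\mathcal{O}_\pi))^*$; the Kirillov correspondence applied to the quotient nilpotent group $U_p/K$ then furnishes an irreducible representation $\bar\pi$ of $U_p/K$ whose pullback to $U_p$ coincides with $\pi$, which forces $K\subset\Ker\pi$.

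The main obstacle is the second step: it rests on the compatibility of the Kirillov correspondence with passage to the quotient by an ideal annihilated by the orbit. This is a standard functoriality property of the Kirillov map which I would invoke from the references already used in the paper (\cite{Kirillov} or \cite{CoGr} when $p=\infty$, and \cite{Moore} when $p$ is a prime integer). Combining the two steps then yields the equivalence of (i) and (ii).
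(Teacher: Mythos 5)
Your first step is correct and is exactly the paper's opening move: by Schur's lemma and the covariance relation $\widetilde\pi(g)\pi(u)\widetilde\pi(g)^{-1}=\pi(g(u))$, membership $g\in P_{\widetilde\pi}$ is equivalent to $g(u)u^{-1}\in\Ker\pi$ for all $u\in U_p$. The gap is in your second step: the identity $\Ker\pi=\bigcap_{f\in\mathcal{O}_\pi}\exp(\Ker f)$ is \emph{false} in general; only the inclusion ``$\supset$'' (which yields the easy implication (ii)$\Rightarrow$(i)) holds. Already for the real Heisenberg group and the representation $\pi_t$ with $t\neq 0$, the coadjoint orbit is the affine hyperplane of functionals taking the value $t$ on the central direction, so $\bigcap_{f\in\mathcal{O}_{\pi_t}}\Ker f=\{0\}$ and your right-hand side is trivial, whereas $\Ker\pi_t=\{(0,0,s): ts\in\ZZ\}$ is a nontrivial discrete central subgroup. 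The correct description, which is what the paper invokes (Lemma 18 of \cite{BachirYves}), is $\Ker\pi=\bigcap_{f\in\mathcal{O}_\pi}\Ker(\chi_f)$ with $\chi_f(\exp X)=e_p(f(X))$: membership only requires $f(X)\in\Ker(e_p)$, i.e.\ $f(X)\in\ZZ$ or $\ZZ_p$, not $f(X)=0$. Your forward inclusion (``the Kirillov orbit must lie in the annihilator of $\log\Ker\pi$'') fails precisely because $\Ker\pi$ need not be connected, so $\pi$ does not factor through a quotient of $U_p$ by a group of the form $\exp(\mathfrak{k})$ for an ideal $\mathfrak{k}$.

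Consequently the lemma is not a pointwise statement about kernels, and its real content --- which your proposal omits --- is the upgrade from ``$g(u)u^{-1}\in\bigcap_{f}\Ker(\chi_f)$ for all $u$'' to ``$f(\log(g(u)u^{-1}))=0$ for all $u$ and all $f\in\mathcal{O}_\pi$''. The paper obtains this by using that the condition holds along every one-parameter family $u_s=\exp(sX)$: by the Campbell--Hausdorff formula $g(u_s)u_s^{-1}=\exp(sY_1+\dots+s^rY_r)$, so $s\mapsto\sum_j s^jf(Y_j)$ is a polynomial taking values in $\ZZ$ (if $p=\infty$) or in the bounded set $\ZZ_p$ (if $p$ is a prime), hence identically zero. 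Your argument can be repaired by inserting this step, but as written the implication (i)$\Rightarrow$(ii) rests on a false identity and is unproved.
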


\begin{proof}
We can assume that $\pi= {\rm Ind}_M^{U_p} \chi_{f_0}$, for  $f_0\in {\mathcal O}_\pi,$
and $M= \exp \mathfrak m$ for a polarization $\mathfrak m$ of $f_0$.
For $g\in \Aut(U_p),$ we have $g\in P_{\widetilde{\pi}}$ if and only if
$$
\pi(g(u))= \pi(u) \tout u\in U_p
$$
that is, 
$$
g(u)u^{-1} \in \Ker (\pi) \tout u\in U_p.
$$
Now, we have (see \cite[Lemma 18]{BachirYves})
$$\Ker (\pi) =\bigcap_{f\in  {\mathcal O}_\pi} \Ker (\chi_{f})$$
and so, $g\in P_{\widetilde{\pi}}$ if and only if 
$$g(u)u^{-1}\in \bigcap_{f\in  {\mathcal O}_\pi} \Ker (\chi_{f}) \tout u\in U_p.$$

Let $g\in P_{\widetilde{\pi}}.$ 
Denote by $X\mapsto g(X)$ the automorphism 
of $\mathfrak{u}_p$ corresponding to $g.$ 
Let $u=\exp (X)$ for $X\in \mathfrak{u}_p$
and $f\in  {\mathcal O}_\pi$. Set $u_t= \exp(tX).$
By the Campbell Hausdorff formula, 
there exists $Y_1, \dots Y_r\in \mathfrak{u}_p$ such that
$$
g(u_t)(u_t)^{-1}= \exp \left(t Y_1 + t^2Y_2+\dots+ t^r Y_r\right),
$$
for every $t\in \QQ_p;$ 
Since
$$1=\chi_{f} (g(u_t)(u_t)^{-1})=e_p(f\left( t Y_1+ t^2Y_2+\dots+ t^r Y_2\right)),\leqno{(*)}$$
it  follows that the polynomial 
$$t\mapsto Q(t)= t f(Y_1) + t^2f(Y_2)+\dots+ t^r f(Y_r)$$
takes its values in $\ZZ$ in case $p=\infty$ and in $\ZZ_p$ (and so $Q$ has bounded image) otherwise.
This clearly implies that $Q(t)=0$ for all $t\in \QQ_p$; in particular, 
we have  
$$\log (g(u)u^{-1})= Y_1+Y_2+\cdots +Y_r \in \Ker (f).$$
 This shows that (i) implies (ii).

Conversely,  assume that (ii) holds. Then clearly 
$$g(u)u^{-1}\in \bigcap_{f\in  {\mathcal O}_\pi} \Ker (\chi_{f}) \tout u\in U_p$$
and so $g\in P_{\widetilde{\pi}}$. $\bsq$
 \end{proof}



\section{Decomposition of the Koopman representation for a nilmanifold}
\label{S-Koopman Nilmanifold}
Let $\mathbf{U}$ be a linear algebraic unipotent group defined over $\QQ$.
Let  $S= \{p_1, \dots, p_r, \infty\}$, where $p_1, \dots, p_r$ are integer primes.
Set 
$$U:=\mathbf{U}(\QQ_S)=\prod_{s\in S} U_p.$$
Since $U$ is a type I group, the unitary dual $\hU$ of $U$  can be identified
with the cartesian product $\prod_{s\in S} \widehat{U_p}$ via the map
$$
\prod_{s\in S}  \widehat{U_p}\to \hU,  \qquad (\pi_p)_{p\in S} \mapsto\otimes_{s\in S} \pi_p,
$$
where $\otimes_{s\in S} \pi_p= \pi_{\infty}\otimes \pi_{p_1} \otimes \dots \otimes \pi_{r}$
is the tensor product of the $\pi_p$'s.

Let $\La:=\mathbf{U}(\ZZ[1/S])$ and consider the corresponding  $S$-\textbf{adic compact nilmanifold} 
$$\X:= U/\Lambda,$$
  equipped with the unique $U$-invariant probability measure $\mu$ on its Borel subsets.

The associated $S$-\textbf{adic solenoid} is 
$$
\Sol= \overline{U}/\overline{\La},
$$
where $\overline{U}:=U/[U,U]$ is the quotient of $U$ by its closed commutator subgroup 
$[U,U]$ and where
 $\overline{\La}$ is the image of $\mathbf{U}(\ZZ[1/S])$ in $\overline{U}.$

Set 
$$\Aut(U):=\prod_{s\in S} \Aut(\mathbf{U}( \QQ_s))$$
and denote by  $\Aut (\X)$ the subgroup of all 
$g\in  \Aut(U)$ with  $g(\La) =\La.$

Let $\Ga$ be a  subgroup of  $\Aut (\X)$.
Let $\kappa$ be the  Koopman representation of  $\Ga\ltimes U$  on $L^2(\X)$
associated to the action $\Ga\ltimes U\curvearrowright \X.$
By Proposition~\ref{Pro-KoopmanDec}, there exists a family $(\pi_i)_{i\in I}$ of irreducible representations of $U,$
such that $\kappa$ is equivalent to 
$$\bigoplus_{i\in I} \ind_{\Ga_i\ltimes U}^{\Ga\ltimes U}(\widetilde{\pi_i}\otimes W_i),$$
where  $ \widetilde{\pi}_i$ is an  irreducible projective representation $ \widetilde{\pi}_i$
 of the stabilizer  $G_i$ of $\pi_i$ in $\Aut(U)\ltimes U$ extending $\pi_i$,
 and where  $W_i$ is a  projective unitary representation
of $G_i \cap (\Ga\ltimes U)$.

Fix  $i\in I.$ We  have  $\pi_i=\otimes_{p\in S}\pi_{i,p}$ for irreducible representations $\pi_{i, p}$ of $U_p.$

We will need the following  more precise description of  $\pi_i.$  
Recall that  $\mathfrak u$ is the Lie algebra of $\mathbf U$
and that $\mathfrak{u}(\QQ)$ denotes the Lie algebra over $\QQ$ consisting of the $\QQ$-points in 
$\mathfrak u.$  Let  $\mathfrak{u}^*(\QQ)$ be the set of $\QQ$-rational points 
in the dual space $\mathfrak u^*;$  so,  $\mathfrak{u}^*(\QQ)$ is the subspace of
$f\in \mathfrak u^*$ with $f(X)\in \QQ$ for all $X\in \mathfrak{u}(\QQ).$
Observe that, for $f\in \mathfrak{u}^*(\QQ),$ we have
  $f(X)\in \QQ_p$ for all $X\in \mathfrak{u}_p=\mathfrak{u}(\QQ_p)$.
  
 A polarization for $f\in \mathfrak{u}^*(\QQ)$ is a Lie subalgebra $\mathfrak m$ 
of ${\mathfrak u}(\QQ)$ such that $f([{\mathfrak m},{\mathfrak m}])=0$ and which is of maximal dimension
with this property.

\begin{proposition}
\label{Prop-Rational}
Let  $\pi_i= \otimes_{p\in S} \pi_{i, p}$ be one of the irreducible representations of $U=\mathbf{U}(\QQ_S)$ appearing  
in the decomposition $L^2(\X)$ as above.
There exist $f_i\in \mathfrak{u}^*(\QQ)$
and  a polarization $\mathfrak{m}_{i} \subset \mathfrak{u}(\QQ)$ for $f_i$
with the following property: for every $p\in S,$ the representation $\pi_{i,p}$ is equivalent 
to $\Ind_{ M_{i,p}}^U \chi_{f_i},$ where $M_{i,p}=\exp(\mathfrak{m}_{i,p})$ and 
$\chi_{f_i}$ is the unitary character of $M_{i,p}$ given by
$$\chi_{f_i}(\exp X)=e_p( f_i(X)),\tout X \in {\mathfrak m}_{i,p}=\mathfrak{\mathfrak m}_{i}\otimes_{\mathbf Q} \mathbf \QQ_p,$$ where $e_p\in \widehat{\QQ_p}$  is  as in Section~\ref{UniDualSolenoid}.
\end{proposition}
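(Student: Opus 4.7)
The strategy is induction on $\dim \mathbf{U}$, extracting rationality from the fact that $\pi_i$ appears as a subrepresentation of $L^2(\X) \cong \ind_\La^U 1_\La$. The existence of a rational polarization $\mathfrak{m}_i \subset \mathfrak{u}(\QQ)$ for any $f_i \in \mathfrak{u}^*(\QQ)$ is a standard algebraic fact (Vergne's construction from any $\QQ$-rational flag of ideals in $\mathfrak{u}(\QQ)$), so the work lies in producing the rational $f_i$ that simultaneously parametrizes $\pi_{i,p}$ at every place.

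In the base case where $\mathbf{U}$ is abelian, one has $U = \QQ_S^d$, $\La = \ZZ[1/S]^d$, and $\pi_i$ is a character of $U$ trivial on $\La$. Section~\ref{UniDualSolenoid} identifies such characters with elements of $\ZZ[1/S]^d \subset \QQ^d \cong \mathfrak{u}^*(\QQ)$, giving the required $f_i$; one takes $\mathfrak{m}_i = \mathfrak{u}(\QQ)$.

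For the inductive step, let $\mathbf{Z}$ be the center of $\mathbf{U}$ (a $\QQ$-subgroup), $Z = \mathbf{Z}(\QQ_S)$, and $\La_Z := \La \cap Z = \mathbf{Z}(\ZZ[1/S])$, a cocompact lattice in $Z$. Let $\omega_i$ be the central character of $\pi_i$. Because $\La_Z \subset \La$ acts trivially on $L^2(\X)$, we have $\omega_i|_{\La_Z} = 1$, and the solenoidal duality from Section~\ref{UniDualSolenoid} identifies $\omega_i$ with a rational element $g_0 \in \mathfrak{z}^*(\QQ)$. If $\omega_i \equiv 1$, then $\pi_i$ factors through $(\mathbf{U}/\mathbf{Z})(\QQ_S)$, appears in the Koopman decomposition of the smaller $S$-adic nilmanifold $(\mathbf{U}/\mathbf{Z})(\QQ_S)/(\mathbf{U}/\mathbf{Z})(\ZZ[1/S])$, and the inductive hypothesis supplies rational data on $(\mathfrak{u}/\mathfrak{z})(\QQ)$ that pulls back. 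If $\omega_i \not\equiv 1$, one picks $X_0 \in \mathfrak{u}(\QQ)$ in the second step of the ascending central series with $g_0([X_0, \mathfrak{u}(\QQ)]) \ne 0$; the kernel of the $\QQ$-linear form $Y \mapsto g_0([X_0, Y])$ is then a codimension-one $\QQ$-subalgebra $\mathfrak{u}_1 \supset \mathfrak{z}(\QQ)$, and at each place Kirillov theory gives $\pi_{i,p} = \ind_{U_{1,p}}^{U_p} \sigma_p$ for an irreducible $\sigma_p$ of $U_{1,p}$. A Frobenius reciprocity argument shows $\sigma = \otimes_{p \in S} \sigma_p$ occurs in $L^2(U_1/\mathbf{U}_1(\ZZ[1/S]))$ for $U_1 := \mathbf{U}_1(\QQ_S)$, and the induction applied to $\mathbf{U}_1$ yields rational data $(f_1, \mathfrak{m}_1)$ for $\sigma$. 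Any $\QQ$-linear extension $f_i \in \mathfrak{u}^*(\QQ)$ of $f_1$ then admits $\mathfrak{m}_i := \mathfrak{m}_1$ as a rational polarization, by the standard Kirillov lemma that a polarization for $f_i|_{\mathfrak{u}_1}$ in a codimension-one subalgebra is a polarization for $f_i$ in $\mathfrak{u}$.

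The main obstacle is the Frobenius--Mackey descent step: verifying that the irreducible factor $\sigma = \otimes_p \sigma_p$ produced by Kirillov induction from $U_1$ actually occurs in $L^2(U_1/\La_1)$ with $\La_1 = \mathbf{U}_1(\ZZ[1/S])$. The model of $\ind_{U_1}^U \sigma$ inside $\ind_\La^U 1_\La$ must be matched with the model of $\sigma$ inside $\ind_{\La_1}^{U_1} 1_{\La_1}$, using the decomposition $U = U_1 \cdot \exp(\QQ_S X_0)$ and the fact that $\La/\La_1$ projects to a cocompact lattice in $U/U_1 \cong \QQ_S$, possibly after passing to a finite-index subgroup. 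A secondary technicality is confirming $\La_Z = \mathbf{Z}(\ZZ[1/S])$ (and more generally $\La \cap \mathbf{U}_1(\QQ_S) = \mathbf{U}_1(\ZZ[1/S])$), which relies on $\QQ$-rationality of the subgroups involved together with a standard snake-lemma argument for $S$-integer points of unipotent groups.
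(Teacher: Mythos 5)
Your proof is a genuinely different route from the one the paper takes, and it is essentially the route the paper explicitly mentions and then declines to carry out. The paper does \emph{not} run an induction on $\dim \mathbf{U}$; instead it quotes Moore's adelic theorem (Theorem 11 of \cite{Moore}), which already provides the decomposition of $L^2(\mathbf{U}(\AA)/\mathbf{U}(\QQ))$ into irreducibles $\ind_{M_f(\AA)}^{\mathbf{U}(\AA)}\chi_f$ with $f\in\mathfrak{u}^*(\QQ)$ and $\mathfrak{m}_f\subset\mathfrak{u}(\QQ)$, and then identifies $\X=\mathbf{U}(\QQ_S)/\mathbf{U}(\ZZ[1/S])$ with the double coset space $K_S\backslash \mathbf{U}(\AA)/\mathbf{U}(\QQ)$ for $K_S=\prod_{p\notin S}\mathbf{U}(\ZZ_p)$, so that $L^2(\X)$ becomes the space of $K_S$-fixed vectors and each isotypic piece $\H_f^{K_S}$ carries the $\mathbf{U}(\QQ_S)$-representation $\ind_{M_f(\QQ_S)}^{\mathbf{U}(\QQ_S)}(\otimes_{p\in S}\chi_{f,p})$ with the \emph{same} rational pair $(f,\mathfrak{m}_f)$. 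This buys a short proof at the cost of invoking the adelic result as a black box; your induction buys self-containedness at the cost of re-executing Moore's argument in the $S$-adic setting, which the authors themselves assert ``carries over with the appropriate changes.''

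Your sketch is structurally sound (abelian base case via the solenoid duality of Section~\ref{UniDualSolenoid}; dichotomy on the central character; Kirillov's codimension-one reduction; rational polarizations via Vergne's construction), but be aware that what you label ``the main obstacle'' --- matching the Mackey model of $\ind_{U_1}^{U}\sigma$ inside $\ind_{\La}^{U}1_\La$ with a realization of $\sigma$ inside $\ind_{\La_1}^{U_1}1_{\La_1}$ --- is precisely the content of Moore's induction and is not dispatched by a one-line Frobenius reciprocity appeal; similarly, the identification of $\La\cap\mathbf{U}_1(\QQ_S)$ with $\mathbf{U}_1(\ZZ[1/S])$ and of the lattice in $\mathbf{U}/\mathbf{Z}$ may only hold up to commensurability, so the induction hypothesis should be stated for arbitrary arithmetic lattices commensurable with the group of $\ZZ[1/S]$-points (the rationality conclusion is robust under this change, but the statement being inducted on must accommodate it). As written, the proposal is a correct plan with its hardest step flagged rather than proved; to make it a complete proof you would need to write out that descent step in the detail Moore gives for the real and adelic cases.
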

\begin{proof}
The same result is proved in Theorem 11 in \cite{Moore} (see also Theorem 1.2 in \cite{Fox})
for the Koopman representation of $\mathbf{U}(\mathbf{A})$ in $L^2(\mathbf{U}(\mathbf{A})/ \mathbf{U}(\mathbf{Q})),$
where $\mathbf{A}$ is the ring of adeles of $\QQ.$
We could check that the proof, which proceeds by induction of the dimension of $\mathbf{U}$, carries over to the Koopman representation on
$L^2(\mathbf{U}(\QQ_S)/ \mathbf{U}(\mathbf{\ZZ}[1/S])$, with the appropriate changes.
We prefer to deduce our claim from  the result for $\mathbf{U}(\mathbf{A})$, as follows.

It is well-known (see \cite{Weil}) that
$$\AA= \left(\QQ_S \times \prod_{p\notin S} \ZZ_p\right) +\QQ$$
and that 
$$\left(\QQ_S \times \prod_{p\notin S} \ZZ_p\right)\cap \QQ= \ZZ[1/S].
$$ 
 This gives rise to a well defined projection  $\vfi:\mathbf{A}/ \mathbf{Q} \to \QQ_S/\ZZ[1/S]$
  given by 
$$\vfi\left((a_S, (a_p)_{p\notin S}) +\QQ\right)= a_S+\ZZ[1/S] \tout a_S\in \QQ_S, (a_p)_{p\notin S}\in \prod_{p\notin \P} \ZZ_p;$$ 
so the fiber  over  a point $a_S+\ZZ[1/S]\in  \QQ_S/\ZZ[1/S]$ is
 $$\vfi^{-1}(a_S+\ZZ[1/S])= \{(a_S, (a_p)_{p\notin S}) +\QQ\mid a_p\in \ZZ_p \text{ for all } p\}.$$
 This induces an identification of 
 $\mathbf{U}(\mathbf{\QQ_S})/ \mathbf{U}(\mathbf{\ZZ}[1/S])=\X$
 with the double coset space $K_S\backslash \mathbf{U}(\mathbf{A})/ \mathbf{U}(\mathbf{Q}),$
 where $K_S$ is the compact subgroup 
 $$K_S=\prod_{p\notin S}\mathbf{U}(\ZZ_p)$$
 of $\mathbf{U}(\mathbf{A}).$
 Observe that this identification is equivariant under translation by elements from $\mathbf{U}(\QQ_S).$
 In this way, we can view 
$L^2(\X)$ as the $\mathbf{U}(\QQ_S)$-invariant subspace  $L^2(K_S\backslash \mathbf{U}(\mathbf{A})/ \mathbf{U}(\mathbf{Q}))$ of $L^2(\mathbf{U}(\mathbf{A})/ \mathbf{U}(\mathbf{Q})).$

Choose a system $T$ of representatives  for the $\Ad^*(\mathbf{U}(\QQ))$-orbits
in $ \mathfrak{u}^*(\QQ)$. By \cite[Theorem 11]{Moore}, for every $f\in T,$ we can find 
 a polarization $\mathfrak{m}_{f}\subset \mathfrak{u}(\QQ)$  for $f$ 
with the following property: setting 
$$\mathfrak{m}_{f}(\mathbf{A})=\mathfrak{\mathfrak m}_{f}\otimes_{\mathbf Q} \mathbf{A},$$
we have a decomposition 
$$
L^2(\mathbf{U}(\mathbf{A})/ \mathbf{U}(\mathbf{Q}))= \bigoplus_{f\in T} \H_f
$$
into irreducible  $\mathbf{U}(\mathbf{A})$-invariant subspaces $\H_f$ such that 
the representation $\pi_f$ of  $\mathbf{U}(\mathbf{A})$ in $\H_f$ is equivalent to 
 $\Ind_{ M_{f}(\mathbf{A})}^{\mathbf{U}(\mathbf{A}) }\chi_{f},$ where 
$$M_{f}(\mathbf{A})=\exp(\mathfrak{m}_{f}(\mathbf{A}))$$ and 
$\chi_{f, \mathbf{A}}$ is the unitary character of $M_{f}(\mathbf{A})$ given  by
$$\chi_{f,  \mathbf{A}}(\exp X)=e( f(X)),\tout X \in {\mathfrak m}_{f}(\mathbf{A});$$
here,  $e$ is the unitary character of $\mathbf{A}$  defined by 
$$
e((a_p)_p) = \prod_{p\in \P \cup \{\infty\}} e_p(a_p) \tout (a_p)_p\in \mathbf{A},
$$
where $\P$ is the set of integer primes.

We have 
$$L^2(K_S\backslash \mathbf{U}(\mathbf{A})/ \mathbf{U}(\mathbf{Q}))= \bigoplus_{f\in T} \H_f^{K_S},$$
where $ \H_f^{K_S}$ is the space of $K_S$-fixed vectors in $\H_f.$
It is clear that the representation of $\mathbf{U}(\QQ_S)$ in $\H_f^{K_S}$
is equivalent to 
$$\Ind_{ M_{f}(\mathbf{\QQ_S})}^{\mathbf{U}(\QQ_S) }(\otimes_{p\in S}\chi_{f, p})= 
\otimes_{p\in S}\left(\Ind_{ M_{f}(\mathbf{\QQ_p})}^{\mathbf{U}(\mathbf{\QQ_p}) }\chi_{f,p}\right),$$
where  $\chi_{f, p}$ is the unitary character of $M_{f}(\QQ_p)$ given  by
$$\chi_{f,  p}(\exp X)=e_p( f(X)),\tout X \in {\mathfrak m}_{f}(\QQ_p).$$
Since $ M_{f}(\mathbf{\QQ_p})$ is a polarization for $f$, each of  the $\mathbf{U}(\QQ_p)$-representations 
$\Ind_{ M_{f}(\mathbf{\QQ_p})}^{\mathbf{U}(\mathbf{\QQ_p}) }\chi_{f,p}$ and, hence, each of the
$\mathbf{U}(\QQ_S)$-representations  
$$\Ind_{ M_{f}(\mathbf{\QQ_S})}^{\mathbf{U}(\QQ_S) }(\otimes_{p\in S}\chi_{f, p})$$ is irreducible.
This proves the claim.
 $\bsq$
 \end{proof}

 We establish another crucial fact about the representations $\pi_i$'s in the following proposition.

 \begin{proposition}
 \label{Prop-Rational2} 
 With the notation of Proposition~\ref{Prop-Rational}, let 
 ${\mathcal O}_\QQ(f_i)$ be the co-adjoint orbit of $f_i$ under $\mathbf{U}(\QQ)$
 and set 
 $$
 \mathfrak{k}_{i,p}=\bigcap_{f\in  {\mathcal O}_\QQ(f_i)} \mathfrak{k}_p(f),
 $$
where  $\mathfrak{k}_{p}(f)$ is the kernel of $f$ in  $\mathfrak{u}_{p}.$
Let $K_{i,p}= \exp ( \mathfrak{k}_{i,p})$ and $K_i= \prod_{p\in S} K_{i,p}.$
 \begin{itemize}
  \item[(i)] $K_i$ is a closed normal subgroup of $U$
  and  $K_{i} \cap \La= K_i\cap \mathbf{U}(\ZZ[1/S])$ is a lattice in $K_i.$
  \item[(ii)]  Let $P_{\widetilde\pi_i}$ be the projective
  kernel of the extension $\widetilde \pi_i$ of  $\pi_i$ to the stabilizer 
  $G_i$ of $\pi$   in $\Aut(U)\ltimes U$. 
  For $g\in G_i$, we have $g\in P_{\widetilde\pi_i}$ if and only if 
 $g(u)\in u K_i$ for every  $u\in U.$
  \end{itemize}
 \end{proposition}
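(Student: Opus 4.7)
The plan is to identify $\mathfrak{k}_{i,p}$ with the Lie algebra of $\Ker(\pi_{i,p})$ via a Zariski-density argument, which makes (i) reduce to standard facts about $\QQ$-rational unipotent subgroups, and makes (ii) a factorwise application of Lemma~\ref{Lem-ProjKernelInduced}.

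First I would check that the intersection $\mathfrak{k}_{i,p} = \bigcap_{f\in\mathcal{O}_\QQ(f_i)} \mathfrak{k}_p(f)$ over the $\mathbf{U}(\QQ)$-coadjoint orbit of $f_i$ coincides with the analogous intersection over the full $\mathbf{U}(\QQ_p)$-orbit $\mathcal{O}_p(f_i)$. For a given $X\in \mathfrak{u}_p$, the assignment $\Phi_X: g\mapsto (\Ad^*(g) f_i)(X)$ is a $\QQ_p$-valued polynomial (a morphism of affine $\QQ_p$-varieties) on $\mathbf{U}$. Since $\mathbf{U}$ is connected unipotent over $\QQ$ and hence isomorphic to affine space as a $\QQ$-variety, $\mathbf{U}(\QQ)$ is Zariski dense in $\mathbf{U}$. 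If $X\in \mathfrak{k}_{i,p}$ then $\Phi_X$ vanishes on $\mathbf{U}(\QQ)$, hence identically, so $X\in \mathfrak{k}_p(f)$ for every $f\in \mathcal{O}_p(f_i)$. By Proposition~\ref{Prop-Rational} the Kirillov orbit of $\pi_{i,p}$ is precisely $\mathcal{O}_p(f_i)$, and the standard Kirillov-theoretic description of $\Ker(\pi_{i,p})$ identifies $\bigcap_{f\in\mathcal{O}_p(f_i)} \mathfrak{k}_p(f)$ with its Lie algebra. In particular $\mathfrak{k}_{i,p}$ is a Lie ideal of $\mathfrak{u}_p$.

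Next I would extract a $\QQ$-form. Each $f\in \mathfrak{u}^*(\QQ)$ satisfies $\mathfrak{k}_p(f) = \ker(f\vert_{\mathfrak{u}(\QQ)}) \otimes_{\QQ}\QQ_p$, hence $\mathfrak{k}_{i,p} = \mathfrak{k}_i \otimes_{\QQ} \QQ_p$ where $\mathfrak{k}_i := \bigcap_{f\in\mathcal{O}_\QQ(f_i)} \ker(f\vert_{\mathfrak{u}(\QQ)})$ is a $\QQ$-rational Lie ideal of $\mathfrak{u}(\QQ)$. This produces a normal unipotent $\QQ$-subgroup $\mathbf{K}_i \subset \mathbf{U}$ with Lie algebra $\mathfrak{k}_i$ and $\mathbf{K}_i(\QQ_p) = K_{i,p}$ for every $p\in S$. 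Consequently $K_i = \mathbf{K}_i(\QQ_S)$ is a closed normal subgroup of $U$, and
$$K_i\cap \La = \mathbf{K}_i(\QQ_S) \cap \mathbf{U}(\ZZ[1/S]) = \mathbf{K}_i(\ZZ[1/S])$$
is cocompact in $K_i$ by the same general fact (invoked for $\mathbf{U}$ itself in the introduction) that the $\ZZ[1/S]$-points of a linear algebraic unipotent $\QQ$-group form a cocompact lattice in its $\QQ_S$-points; this proves (i).

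For (ii), the diagonal action of $\Aut(U)\ltimes U = \prod_p (\Aut(U_p)\ltimes U_p)$ on $\hU = \prod_p \widehat{U_p}$ gives $G_i = \prod_{p\in S} G_{i,p}$, with $G_{i,p}$ the stabilizer of $\pi_{i,p}$ in $\Aut(U_p)\ltimes U_p$; correspondingly $\widetilde{\pi}_i$ may be chosen as $\bigotimes_{p\in S}\widetilde{\pi}_{i,p}$. Since an operator on a multi-factor tensor product acts as a scalar exactly when each of its tensor factors does, one has $P_{\widetilde{\pi}_i}= \prod_p P_{\widetilde{\pi}_{i,p}}$. Applying Lemma~\ref{Lem-ProjKernelInduced} to each $p\in S$ and using Step~1 together with the bijectivity of $\exp$ (which pulls the intersection inside the exponential), the condition $g_p\in P_{\widetilde{\pi}_{i,p}}$ becomes $g_p(u_p)u_p^{-1}\in \exp(\mathfrak{k}_{i,p}) = K_{i,p}$ for every $u_p\in U_p$; reassembling over $p\in S$ yields the desired equivalence $g\in P_{\widetilde{\pi}_i} \Leftrightarrow g(u)\in uK_i$ for all $u\in U$. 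The main obstacle is the Zariski-density step at the outset: once $\mathfrak{k}_{i,p}$ is recognized as the Lie algebra of $\Ker(\pi_{i,p})$, the remainder is routine structure theory of $\QQ$-unipotent groups and componentwise bookkeeping, with the only delicate point being that the tensor-product decomposition of $\widetilde{\pi}_i$ is only well-defined up to scalar, which is however harmless for the projective kernel.
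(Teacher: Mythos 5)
Your proof is correct and follows essentially the same route as the paper: for (i) you descend to the $\QQ$-rational ideal $\mathfrak{k}_i$ defining a normal unipotent $\QQ$-subgroup $\mathbf{K}_i$ with $K_{i,p}=\mathbf{K}_i(\QQ_p)$ and invoke the standard cocompact-lattice fact, and for (ii) you factor the projective kernel over $p\in S$ and apply Lemma~\ref{Lem-ProjKernelInduced} componentwise. Your explicit Zariski-density argument identifying $\bigcap_{f\in\mathcal{O}_\QQ(f_i)}\mathfrak{k}_p(f)$ with the intersection over the full $\mathbf{U}(\QQ_p)$-coadjoint orbit is a detail the paper leaves implicit when passing from Lemma~\ref{Lem-ProjKernelInduced} (stated for the $\QQ_p$-orbit) to $K_{i,p}$ (defined via the $\QQ$-orbit), and it is a welcome addition.
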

 \begin{proof}
 (i) 
Let 
 $$
 \mathfrak{k}_{i,\QQ}=\bigcap_{f\in  {\mathcal O}_\QQ(f_i)} \mathfrak{k}_\QQ(f),
 $$
 where  $\mathfrak{k}_{\QQ}(f)$ is the kernel of $f$ in  $\mathfrak{u}(\QQ).$
 Observe that $ \mathfrak{k}_{i,\QQ}$ is an ideal in $\mathfrak{u}(\QQ),$
 since it is $\Ad(\mathbf{U}(\QQ))$-invariant.
 So, we have 
 $$\mathfrak{k}_{i, \QQ}= \mathfrak{k}_{i}(\QQ)$$ for an ideal   $\mathfrak{k}_{i}$
 in $ \mathfrak{u}.$
 Since  $f\in \mathfrak{u}^*(\QQ)$ for  $f\in  {\mathcal O}_\QQ(f_i),$ we have 
 $$
  \mathfrak{k}_{i,p}(f)=\mathfrak{k}_{i,\QQ}(f)\otimes_{\mathbf Q} \QQ_p
 $$
 and hence
 $$
\mathfrak{k}_{i,p}=   \mathfrak{k}_{i}(\QQ_p).
$$
Let $\mathbf{K}_i=\log ( \mathfrak{k}_{i}).$ Then $\mathbf{K}_i$ is a normal
algebraic $\QQ$-subgroup  of $\mathbf{U}$ and we have 
$K_{i,p}=  \mathbf{K}_i(\QQ_p)$ for every $p$; so,
$$K_i= \prod_{s\in S} \mathbf{K}_i(\QQ_p)=  \mathbf{K}_i(\QQ_S)$$
and  $K_i \cap \La=  \mathbf{K}_i(\ZZ[1/S])$ is a lattice in $K_i.$
This proves Item  (i).

 To prove Item (ii),  observe that $$P_{\widetilde\pi_i}= \bigcap_{p\in S} P_{i,p},$$ where
 $P_{i,p}$ is the projective kernel  of $\widetilde{\pi_{i,p}}$.

 Fix $p\in S$ and let $g\in G_i$.
  By Lemma~\ref{Lem-ProjKernelInduced},  $g\in P_{i,p}$ if and only if 
  $g(u)\in u K_{i,p}$ for every  $u\in U_p=\mathbf{U}(\QQ_p). $
  This finishes the proof. $\bsq$
  
 \end{proof}

\section{Proof of Theorem~\ref{Theo1}}

Let $\mathbf{U}$ be a linear algebraic unipotent group defined over $\QQ$
and $S= \{p_1, \dots, p_r, \infty\}$, where $p_1, \dots, p_r$ are integer primes. 
Set  $U:=\mathbf{U}(\QQ_S)$   and  $\La:=\mathbf{U}(\ZZ[1/S])$.

Let $\X= U/\Lambda$ and $\Sol$  be the $S$-adic nilmanifold  and the associated $S$-adic solenoid
as in Section~\ref{S-Koopman Nilmanifold}.
Denote by $\mu$ the translation invariant  probability measure on 
$\X$ and let $\nu$ be the image of $\mu$ under the canonical projection 
$\vfi: \X\to \Sol.$
We identify  $L^2(\Sol)=L^2(\Sol, \nu)$ with the closed 
$\Aut(\X)$-invariant subspace 
$$\{f\circ \vfi\mid f\in L^2(\Sol)\}$$
of $L^2(\X)=L^2(\X,\mu).$
We have an orthogonal decomposition into $\Aut(\X)$-invariant 
subspaces
$$
L^2(\X)= \CCC \mathbf{1}_{\X}\oplus L_0^2(\Sol) \oplus \H, 
$$
where 
$$L_0^2(\Sol)=\{f\in L^2(\Sol)\mid \int_{\X} f d\mu=0\}$$
and where $\H$ is the orthogonal complement of $L^2(\Sol)$
in $L^2(\X).$

Let $\Ga$ be a  subgroup of  $G:=\Aut(U).$
 Let $\kappa$ be the Koopman representation of $\Ga$ on $L^2(\X)$ and 
denote by $\kappa_1$ and $\kappa_2$ the restrictions of $\kappa$
to respectively $L_0^2(\Sol)$ and  $ \H.$ 

Let $\Sigma_1$ be a set of representatives for the $\Ga$-orbits in $\widehat{\mathbf{Sol}_S}\setminus\{ \mathbf{1}_{\mathbf{Sol}_S}\}$.
We have 
$$
\kappa_1 \cong \bigoplus_{\chi\in \Sigma_1} \la_{\Ga/\Ga_\chi}, 
$$
where $\Ga_\chi$ is the stabilizer of $\chi$ in $\Ga$ and 
$\la_{\Ga/\Ga_\chi}$ is the quasi-regular representation
of $\Ga$ on $\ell^2(\Ga/\Ga_\chi).$

By Proposition~\ref{Pro-KoopmanDec}, 
there exists a family $(\pi_i)_{i\in I}$ of irreducible representations of $G,$
such that $\kappa_2$ is  equivalent to a direct sum
$$\bigoplus_{i\in I}\ind_{\Ga_i}^{\Ga}(\widetilde{\pi_i}|_{\Ga_i}\otimes W_i),$$
where  $\widetilde{\pi}_i$ is an  irreducible projective representation 
 of the stabilizer  $G_i$ of $\pi_i$ in $\Aut(G)\ltimes G$ extending $\pi_i$,
 and where  $W_i$ is a  projective unitary representation
of $\Ga_i := \Ga\cap G_i.$

\begin{proposition}
\label{Prop-ClosedSubg}
For $i\in I,$  let $\widetilde{\pi}_i$   be the (projective) representation of $G_i$  
and let $\Ga_i$  be as above.
There exists a real number  $r\geq 1$ such that  $\widetilde{\pi_i}|_{\Ga_i}$ is 
strongly $L^r$ modulo $ P_{\widetilde\pi_i}\cap \Ga_i$,
where  $P_{\widetilde\pi_i}$ is the projective kernel of $\widetilde{\pi}_i.$

 \end{proposition}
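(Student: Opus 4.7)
The plan is to deduce the proposition from Proposition~\ref{Pro-DecayRepAlgGr}, which provides strong $L^{r_0}$ decay of $\widetilde{\pi_i}$ on the ambient algebraic group $G_i$, by combining it with the tensor-power reformulation of Proposition~\ref{Pro-LpMatrCoeff} and the $\QQ$-rational structure of $P_{\widetilde{\pi_i}}$ provided by Proposition~\ref{Prop-Rational2}. The strategy is to embed a sufficiently high tensor power of $\widetilde{\pi_i}$ as a subrepresentation of an infinite multiple of a monomial representation $\Ind_{P_{\widetilde{\pi_i}}}^{G_i}\lambda_{\widetilde{\pi_i}}^{k}$, restrict to $\Ga_i$, and exploit the fact that the image of $\Ga_i$ in $G_i/P_{\widetilde{\pi_i}}$ is a discrete subgroup to verify directly that the restricted monomial representation has $\ell^{2}$ matrix coefficients on a dense set of test vectors.

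First, by Proposition~\ref{Pro-DecayRepAlgGr} there is $r_0\ge 1$, depending only on $\dim G_i$, such that $\widetilde{\pi_i}$ is strongly $L^{r_0}$ modulo $P_{\widetilde{\pi_i}}$. Fix an integer $k\ge r_0/2$. Proposition~\ref{Pro-LpMatrCoeff} then gives that $\widetilde{\pi_i}^{\otimes k}$ is contained in an infinite multiple of $\Ind_{P_{\widetilde{\pi_i}}}^{G_i}\lambda_{\widetilde{\pi_i}}^{k}$, and restricting this inclusion to $\Ga_i$ yields that $\widetilde{\pi_i}|_{\Ga_i}^{\otimes k}$ is contained in an infinite multiple of $\bigl(\Ind_{P_{\widetilde{\pi_i}}}^{G_i}\lambda_{\widetilde{\pi_i}}^{k}\bigr)|_{\Ga_i}$. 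Next, by Proposition~\ref{Prop-Rational2}(i)--(ii), $P_{\widetilde{\pi_i}}$ is a $\QQ$-algebraic normal subgroup of $G_i$ (it is characterised in terms of the $\QQ$-algebraic subgroup $K_i\subset\mathbf{U}$), so $G_i/P_{\widetilde{\pi_i}}$ inherits a $\QQ$-algebraic group structure. Since $\Ga_i\subset\Aut(\X)$ consists of automorphisms defined over $\ZZ[1/S]$, the image $\overline{\Ga_i}$ of $\Ga_i$ in $(G_i/P_{\widetilde{\pi_i}})(\QQ_S)$ lies in the $\ZZ[1/S]$-rational points, and hence is a discrete subgroup by the discreteness of $\ZZ[1/S]$ in $\QQ_S$.

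It remains to verify that $\bigl(\Ind_{P_{\widetilde{\pi_i}}}^{G_i}\lambda_{\widetilde{\pi_i}}^{k}\bigr)|_{\Ga_i}$ is strongly $L^{2}$ modulo $\Ga_i\cap P_{\widetilde{\pi_i}}$. Realize this representation on $L^{2}(G_i/P_{\widetilde{\pi_i}})$ with the natural cocycle twist. By discreteness of $\overline{\Ga_i}$ in $G_i/P_{\widetilde{\pi_i}}$, one can choose a relatively compact open set $K\subset G_i/P_{\widetilde{\pi_i}}$ whose $\overline{\Ga_i}$-translates have finite overlap multiplicity
\[
C(K):=\sup_{x\in G_i/P_{\widetilde{\pi_i}}}\,\sum_{\bar\gamma\in\overline{\Ga_i}}\chi_{\bar\gamma K}(x)<\infty.
\]
For $f\in C_c(G_i/P_{\widetilde{\pi_i}})$ supported in $K$ and $h\in L^{2}(G_i/P_{\widetilde{\pi_i}})$, a standard Cauchy--Schwarz estimate using that $\supp(\Ind(\gamma)f)\subset\bar\gamma K$ gives
\[
\sum_{\bar\gamma\in\overline{\Ga_i}}\bigl|\langle \Ind(\gamma)f,h\rangle\bigr|^{2}\le C(K)\,\|f\|_{2}^{2}\,\|h\|_{2}^{2}.
\]
Hence the restricted monomial representation is strongly $L^{2}$ modulo $\Ga_i\cap P_{\widetilde{\pi_i}}$ on the dense subspace of compactly supported test vectors. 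A density argument transfers this strong $L^{2}$ property to the subrepresentation $\widetilde{\pi_i}|_{\Ga_i}^{\otimes k}$, and the tensor identity $|C^{\widetilde{\pi_i}}_{\xi,\eta}|^{k}=|C^{\widetilde{\pi_i}^{\otimes k}}_{\xi^{\otimes k},\eta^{\otimes k}}|$ then yields that $\widetilde{\pi_i}|_{\Ga_i}$ is strongly $L^{r}$ modulo $\Ga_i\cap P_{\widetilde{\pi_i}}$ with $r=2k$.

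The main technical obstacle is the density/approximation step transferring the $\ell^{2}$-integrability bound from the ambient monomial representation to the subrepresentation $\widetilde{\pi_i}|_{\Ga_i}^{\otimes k}$: vectors in this subrepresentation do not a priori correspond to compactly supported functions in the model $L^{2}(G_i/P_{\widetilde{\pi_i}})\otimes\ell^{2}$, and to push the Cauchy--Schwarz estimate through the density limit one must approximate them by compactly supported vectors with supports in a common compact set, project onto the invariant subspace, and pass to the limit using weak compactness of bounded balls in $\ell^{2}(\overline{\Ga_i})$.
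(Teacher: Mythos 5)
Your proposal shares its essential geometric input with the paper's proof: both rest on the fact, extracted from Proposition~\ref{Prop-Rational2}, that the image of $\Ga_i$ in $G_i/P_{\widetilde\pi_i}$ is discrete (the paper phrases this as $P_{\widetilde\pi_i}=\Ker(\vfi)$ for the natural map $\vfi:\Aut(U)\to\Aut(U/K_i)$, with $\vfi(\Ga_i)$ discrete because it preserves the lattice $K_i\La/K_i$; your appeal to ``$\ZZ[1/S]$-rational points'' is a looser formulation of the same fact, and the lattice-preservation argument is the cleaner way to justify it). But where the paper then simply observes that it suffices to show $\Ga_i P_{\widetilde\pi_i}$ is closed in $G_i$ and invokes the restriction principle from the proof of Proposition 6.2 in \cite{HoMo} --- a strongly $L^r$ representation of $G_i$ modulo $P_{\widetilde\pi_i}$ restricts to a strongly $L^r$ representation of a closed subgroup $H$ modulo $H\cap P_{\widetilde\pi_i}$ once $HP_{\widetilde\pi_i}$ is closed, proved by smoothing matrix coefficients against a bump function supported in a transversal neighbourhood --- you route the argument through the tensor-power embedding of Proposition~\ref{Pro-LpMatrCoeff} and attempt a bare-hands $\ell^2$ bound for the restricted monomial representation.

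The gap is exactly where you flag it, and it is not cosmetic. Your finite-overlap Cauchy--Schwarz estimate is valid only for vectors $f$ supported in a fixed compact set $K$, with a constant $C(K)$ that grows with $K$. Vectors of the subrepresentation $\widetilde{\pi_i}|_{\Ga_i}^{\otimes k}$ are not compactly supported in the model $L^2(G_i/P_{\widetilde\pi_i})\otimes\ell^2$; if you approximate such a vector by compactly supported $f_n$ with supports $K_n$ exhausting the space, the bound you obtain is $C(K_n)^{1/2}\Vert f_n\Vert\,\Vert h\Vert$ with $C(K_n)\to\infty$, so Fatou's lemma yields nothing, and projecting the $f_n$ onto the invariant subspace destroys compact support, so the estimate cannot be applied after projection either. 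There is a second, related defect: even granting a dense subspace of the subrepresentation with $\ell^2$ coefficients, descending to $\widetilde{\pi_i}|_{\Ga_i}$ via $|C^{\pi}_{\xi,\eta}|^{k}=|C^{\pi^{\otimes k}}_{\xi^{\otimes k},\eta^{\otimes k}}|$ requires that dense subspace to contain, or suitably approximate, the diagonal vectors $\xi^{\otimes k}$, and a dense subspace produced by your construction has no reason to do so. The repair is to drop the tensor-power detour at this stage and apply the Howe--Moore restriction argument directly to $\widetilde\pi_i$ and the closed subgroup $\Ga_i$; the only hypothesis to verify is that $\Ga_i P_{\widetilde\pi_i}$ is closed in $G_i$, which is precisely what your discreteness observation delivers. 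The tensor-power embedding is then deployed later, in the proof of Theorem~\ref{Theo1}, after the present proposition is already in place.
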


\begin{proof}
By Proposition~\ref{Pro-DecayRepAlgGr}, 
there exists a real number $r\geq 1$ such that  the representation  $\widetilde{\pi}_i$ of 
the algebraic group $G_i$  is  strongly $L^r$ modulo $P_{\widetilde\pi_i}$.
In order to show  that $\widetilde{\pi_i}|_{\Ga_i}$ is 
strongly $L^r$ modulo $ P_{\widetilde\pi_i}\cap \Ga_i$,
it suffices to show that $\Ga_i P_{\widetilde\pi_i}$ is closed in $G_i$
(compare with the proof of Proposition 6.2 in \cite{HoMo}).

Let $K_i$ be the  the  closed   $G_i$-invariant normal subgroup $K_i$  of $U$
as described in Proposition~\ref{Prop-Rational2}.
Then $\overline{\La}=K_i\La/K_i$ is a lattice in the unipotent  group $\overline{U}= U/K_i$
By  Proposition~\ref{Prop-Rational2}.ii,  $P_{\widetilde\pi_i}$ coincides
with the kernel of the natural homomorphism
$\vfi: \Aut(U)\to \Aut(\overline{U})$.
Hence, we have 
$$\Gamma_i P_{\widetilde\pi_i}= \vfi^{-1}(\vfi (\Ga_i)).$$
Now, $\vfi (\Ga_i)$ is a discrete (and hence closed) subgroup 
of $\Aut(\overline{U})$, since $\vfi(\Ga_i)$ preserves $\overline{\La}$
(and so $\vfi( \Ga_i) \subset \Aut(\overline{U}/ \overline{\La})).$
It follows from the continuity of $\vfi$ that $\vfi^{-1}(\vfi (\Ga_i))$
is closed in $\Aut(U). \bsq$
\end{proof}

\medskip

\bigskip
\noindent
\textbf{Proof of Theorem~\ref{Theo1}}

We have to show that, if $1_\Ga$ is weakly contained in $\kappa_2,$
then $1_\Ga$ is weakly contained in $\kappa_1.$
It suffices to show that,  if $1_\Ga$ is weakly contained in $\kappa_2,$
then there exists a finite index subgroup $H$ of $\Ga$ such that 
$1_H$ is weakly contained in $\kappa_1|_{H}$ (see Theorem 2 in \cite{BekkaFrancini}).

We proceed by induction  on the integer
$$n(\Ga):=\sum_{p\in S} \dim \ZC_p(\Ga),$$
where  
$\ZC_p (\Ga)$ is the Zariski closure of the projection of $\Ga$ in $GL_n(\QQ_p)$.

If $n(\Ga)=0,$ then $\Ga$ is finite and there is nothing to prove.

Assume that $n(\Ga)\geq 1$ and that the claim above is proved for every 
 countable subgroup $H$ of $\Aut(\X)$ with  $n(H) <n(\Ga).$

 Let $I_{\rm fin} \subset I$ be the set of all $i\in I$ such that 
$\Ga_i=G_i\cap \Ga$ has finite index in $\Ga$ and set $I_{\infty}=I \setminus I_{\rm fin}.$
With $V_i=(\widetilde{\pi_i}|_{\Ga_i}\otimes W_i)$, set
$$
\kappa_2^{\rm fin} =  \bigoplus_{i\in I_{\rm fin} } \ind_{\Ga_i}^{\Ga} V_i \qquad\text{and}
\qquad
\kappa_2^\infty= \bigoplus_{i\in I_{\infty} } \ind_{\Ga_i}^{\Ga} V_i.
$$
Two cases can occur.

 \medskip
\noindent
$\bullet$ \emph{First case:}   $1_\Ga$ is weakly contained in $\kappa_2^\infty.$

Observe that  $n(\Ga_i)<n(\Ga)$ for $i\in I_{\infty}.$ Indeed,  otherwise
$\ZC_p (\Ga_i)$ and   $\ZC_p (\Ga)$ would have the same connected component $C^0_p$ 
for every $p\in S,$ since  $\Ga_i \subset  \Ga.$ Then 
$$C^0:=\bigcap_{p\in S} C^0_p $$  would  stabilize  $\pi_i$ and  $\Ga\cap C^0$ would therefore be contained in $\Ga_i.$
 Since $\Ga\cap C^0$  has finite index in $\Ga,$  this would be a contradiction to the  fact that 
 $\Ga_i$ has infinite index in $\Ga.$
 
By restriction,  $1_{\Ga_i}$ is weakly contained in $\kappa_2|_{\Ga_i}$ for every $i\in I.$
Hence, by the induction hypothesis, $1_{\Ga_i}$ is weakly contained in $\kappa_1|_{\Ga_i}$
for every $i\in I_\infty.$
Now, on the one hand, we have
$$
\kappa_1|_{\Ga_i} \cong \bigoplus_{\chi\in T_i} \la_{{\Ga_i}/\Ga_\chi\cap{\Ga_i}},
$$
for a subset $T_i$ of  $\widehat{\mathbf{Sol}_S}\setminus\{ \mathbf{1}_{\mathbf{Sol}_S}\}$.
It follows that $\Ind_{\Ga_i}^\Ga 1_{\Ga_i}=\la_{\Ga/\Ga_i}$ is weakly contained in 
$$\bigoplus_{\chi\in T_i} \Ind_{\Ga_i}^\Ga( \la_{\Ga_i/\Ga_\chi\cap \Ga_i})= 
 \bigoplus_{\chi\in T_i}
\la_{\Ga/\Ga_\chi\cap \Ga_i},
$$
 for every $i\in I_\infty.$
On the other hand, since $1_\Ga$ is weakly contained
in
$$\kappa_2\cong \bigoplus_{i\in I_\infty}\ind_{\Ga_i}^{\Ga}(\widetilde{\pi_i}|_{\Ga_i}\otimes W_i),$$
 Lemma~\ref{Lem-Herz} shows that 
 $1_\Ga$ is weakly contained in  $\bigoplus_{i\in I_\infty}\la_{\Ga/\Ga_i}.$
 It follows that $1_\Ga$  is weakly contained in 
$$ \bigoplus_{i\in I_\infty} \bigoplus_{\chi\in T_i}\la_{\Ga/\Ga_\chi\cap \Ga_i}.$$
Hence, by Lemma~\ref{Lem-Herz} again,
$1_\Ga$  is weakly contained in 
$$ \bigoplus_{i\in I_\infty} \bigoplus_{\chi\in T_i}\la_{\Ga/\Ga_\chi}.$$
This shows that $1_\Ga$  is weakly contained in $\kappa_1.$

 \medskip
\noindent
$\bullet$ \emph{Second case:}    $1_\Ga$ is weakly contained in $\kappa_2^{\rm fin}.$

By the Noetherian  property of the Zariski topology, we can find
finitely many indices $i_1, \dots, i_r$ in $I_{\rm fin}$ such that, for every $p\in S,$ we have
$$
\ZC_p(\Ga_{i_1})\cap\cdots \cap \ZC_p(\Ga_{i_r}) =\bigcap_{i\in I_{\rm fin}}  \ZC_p(\Ga_{i}),
$$
Set $H:=\Ga_{i_1}\cap \cdots\cap \Ga_{i_r}.$
Observe that $H$ has finite index in $\Ga.$
 Moreover, it follows from Lemma~\ref{Lem-RationalStab} 
 that $\ZC_p(\Ga_{i_1})\cap\cdots \cap \ZC_p(\Ga_{i_r}) $ stabilizes $\pi_{i,p}$ for every  $i\in I_{\rm fin}$ and $p\in S.$ Hence,  $H$ is contained in $\Ga_i$ for every  $i\in I_{\rm fin}$.
 
By Proposition~\ref{Pro-KoopmanDec}, we  have a decomposition of $\kappa_2^{\rm fin}|_H$
into the direct sum
$$\bigoplus_{i\in I_{\rm fin}} (\widetilde{\pi_i}\otimes W_i)|_H.$$
By Proposition \ref{Pro-LpMatrCoeff} and  Proposition~\ref{Prop-ClosedSubg},
there exists a real number $r\ge 1 ,$ which is independent of $i,$ 
such that  $(\widetilde{\pi_i}\otimes W_i)|_H$
is a strongly $L^r$ representation of $H$ modulo its projective kernel $P_i$.
Observe that $P_i$ is contained in the projective kernel $ P_{\widetilde \pi_i}$ of $ \widetilde \pi_i.$
Hence (see Proposition \ref{Pro-LpMatrCoeff}),  there exists an integer $k\geq 1$ such that 
$\kappa_2^{\rm fin}|_H^{\otimes k}$ is contained in a multiple of 
the direct sum 
$$\oplus_{i\in I_{\rm fin}} \Ind_{ P_{\widetilde \pi_i}}^H \rho_i,$$
for  representations $\rho_i$ of $ P_{\widetilde \pi_i}.$
Since $1_H$ is weakly contained in $\kappa_2^{\rm fin}|H$ and hence in $\kappa_2^{\rm fin}|_H^{\otimes k},$
using Lemma~\ref{Lem-Herz}, it follows that $1_H$ is   weakly contained  in 
$$\oplus_{i\in I_{\rm fin}} \lambda_{H/(H\cap  P_{\widetilde \pi_i})}.$$

Let $i\in I.$ We claim that  $P_i$ is contained in $\Ga_\chi$ for some character $\chi$ from
 $\widehat{\mathbf{Sol}_S}\setminus\{ \mathbf{1}_{\mathbf{Sol}_S}\}$. Once proved, this will imply,
 again by Lemma~\ref{Lem-Herz},  that $\oplus_{i\in I_{\rm fin}} \lambda_{H/(H\cap  P_{\widetilde \pi_i})}$ 
 and, hence $1_H$, is weakly contained in $\kappa_1|_H.$ 
 Since $H$ has finite index in $\Ga,$ this will show that 
 $1_\Ga$ is weakly contained in $\kappa_1$ and conclude the proof.

To prove the claim, recall from Proposition~\ref{Prop-Rational2} that 
there exists a   closed normal subgroup $K_i$ of $U$ with the following properties:
$K_i\La/K_i$ is a lattice in the unipotent algebraic group $U/K_i$, $K_i$ is invariant under $ P_{\widetilde \pi_i}$ 
and $ P_{\widetilde \pi_i}$ acts as the identity  on    $U/K_i.$ 
Observe that  $K_i\neq U$, since $\pi_i$ is not a unitary character of $U$.
We can find  a non-trivial unitary character $\chi$ of $U/K_i$  which is trivial 
on  $K_i\La/K_i$.
Then $\chi$ lifts to a non-trivial unitary character which is fixed by $ P_{\widetilde \pi_i}$
and hence by $P_i.$
$\bsq$

\section{ An example: the $S$-adic Heisenberg nilmanifold}
\label{S:Exa}
As an example, we study the spectral gap  property for  group of automorphisms  
of  the $S$-adic Heisenberg nilmanifold. We will give a quantitative estimate 
for the norm of associated convolution operators, as we did in \cite{BekkaHeu} in the
case of real Heisenberg nilmanifolds (that is, in the case  $S=\{\infty\}$).

Let $\KK$ be an algebraically closed field containing  $\QQ_p$ for 
$p=\infty$ and for all prime integers  $p$.
For an integer $n\geq 1$, consider the symplectic form $\beta$ on $\KK^{2n}$ given by
$$\beta((x,y),(x',y'))= (x,y)^t J (x',y')\tout (x,y),(x',y')\in \KK^{2n},$$
where $J$ is the $(2n\times 2n)$-matrix 
\[
J=\left(
\begin{array}{cc}
 0&I\\
-I_{n}& 0
\end{array}\
\right).
\]
The symplectic group 
$$
{Sp}_{2n}= \{g\in GL_{2n}(\KK)\mid {^{t}g}Jg=J\}
$$
is an algebraic  group defined over $\QQ.$

The $(2n+1)$-dimensional Heisenberg group is the unipotent algebraic
 group $\mathbf{H}$ defined over $\QQ,$ with underlying set $\KK^{2n}\times \KK$ and product
$$((x,y),s)((x',y'),t)=\left((x+x',y+y'),s + t +\beta((x,y),(x',y'))\right),
$$
for $(x,y), (x',y')\in \KK^{2n},\, s,t\in \KK.$

The group $Sp_{2n}$  acts by rational automorphisms of $H_{2n+1},$ given by 
$$
 g((x,y),t)= (g(x,y),t) \tout g\in Sp_{2n},\ (x,y)\in \KK^{2n},\ t\in \KK.
$$

Let $p$ be either an integer prime or $p=\infty.$
Set $H_p= \mathbf{H}(\QQ_p).$ The center $Z$ of $H_p$ is $\{(0,0, t)\mid t\in \QQ_p\}.$
The unitary dual $\widehat{H}$ of $H$ consists of the equivalence classes of the following representations:
\begin{itemize}
\item[$\bullet$] the unitary characters of the abelianized group $H/Z$;
\item[$\bullet$] for every $t\in\QQ_p\setminus\{0\},$ the  infinite dimensional representation 
$\pi_t$ defined on $L^2(\QQ_p^n)$ by the formula
$$
\pi_t((a,b),s)\xi(x)= e_p({ts})e_p(\langle a, x-b \rangle) \xi(x-b)
$$
for $((a,b),s)\in H,$ $\xi\in L^2(\QQ_p^n),$ and $x\in\QQ_p^n,$
where $e_p \in\widehat{\QQ_p}$ is as in Section~\ref{UniDualSolenoid}.
\end{itemize}
For $t\neq 0,$ the representation $\pi_t$
is, up to unitary equivalence, the unique 
irreducible unitary representation  of $H$ whose restriction
to the centre $Z$ is a multiple of the unitary character $s\mapsto e_p{(ts)}.$

For $g\in Sp_{2n}(\QQ_p)$ and $t\in\QQ_p\setminus\{0\},$
the representation $\pi_t^g$ is unitary equivalent to $\pi_t,$
since both representations have the same 
restriction to $Z.$ This shows that $Sp_{2n}(\QQ_p)$ stabilizes
$\pi_t$. We denote the corresponding projective representation 
of $Sp_{2n}(\QQ_p)$ by $\omega_t^{(p)}$.
The representation $\omega_t^{(p)}$ has different names: it is called \textbf{metaplectic representation, Weil's representation, }  or \textbf{oscillator representation.}
The projective kernel of  $\omega_t^{(p)}$ 
coincides with the (finite) center of  $Sp_{2n}(\QQ_p)$  and 
$\omega_t^{(p)}$ is strongly $L^{4n+2+ \eps}$ 
on $Sp_{2n}(\QQ_p)$ for every $\eps>0$ (see  Proposition 6.4 in \cite{HoMo} or Proposition 8.1 in \cite{Howe3}).

Let $S= \{p_1, \dots, p_r, \infty\}$, where $p_1, \dots, p_r$ are integer primes. 
Set  $U:=\mathbf{H}(\QQ_S)$   and  
$$\La:=\mathbf{H}(\ZZ[1/S])=\{((x,y),s)\ :\ x,y\in \ZZ^n[1/S], s\in \ZZ[1/S]\}.$$
Let $\X= U/\Lambda$;
the associated $S$-adic solenoid is $\Sol= \QQ_S^{2n}/\ZZ[1/S]^{2n}.$ 
The group $Sp_{2n}(\ZZ[1/S])$ is a subgroup of $\Aut(\X)$.
The action of $Sp_{2n}(\ZZ[1/S])$ on $\Sol$ is induced by its  linear representation
by linear bijections on $\QQ_S$.

Let $\Ga$ be a subgroup of $Sp_{2n}(\ZZ[1/S])$.
 The  Koopman representation $\kappa$ 
of $\Ga$ on $L^2(\X)$ decomposes as  
$$\kappa=\mathbf{1}_{\X}\oplus \kappa_1\oplus \kappa_2,
$$
where $\kappa_1$ is the restriction of $\kappa$
to $L_0^2(\Sol)$
and $\kappa_1$  the restriction of $\kappa$ to the orthogonal complement of $L^2(\Sol)$
in $L^2(\X).$
Since  $Sp_{2n}(\QQ_p)$ stabilizes every  infinite dimensional representation of 
$H_p,$ it follows from Proposition~\ref{Prop-Rational2} that there exists a subset 
$I\subset \QQ$ such that 
 $\kappa_2$ is  equivalent to a direct sum
$$\bigoplus_{t\in I}\left(\otimes_{p\in S}(\omega_t^{(p)}|_\Ga\otimes W_i)\right),$$
where  $ W_i$ is an   projective representation $\Ga.$

Let $\nu$ be a  probability  measure  on $\Ga.$ We can give an estimate
of the norm of $\kappa_2(\nu)$ as in \cite{BekkaHeu} in the case of 
$S=\{\infty\}.$ Indeed, by a general inequality (see Proposition 30 in \cite{BachirYves}),
we have 
$$
\Vert \kappa_2(\nu)\Vert \leq \Vert \left(\kappa_2\otimes\overline{\kappa_2}\right)^{\otimes k}(\nu)\Vert^{1/2k},
$$
for every integer $k\geq 1,$ where $\overline{\kappa_2}$ denotes the representation conjugate to  $\kappa_2$.
Since  $\omega_t^{(p)}$ is strongly $L^{4n+2+ \eps}$ 
on $Sp_{2n}(\QQ_p)$ for any   $t\in I$ and $p\in S,$  
 Proposition~\ref{Pro-LpMatrCoeff} implies  that $(\kappa_2\otimes\overline{\kappa_2})^{\otimes (n+1)}$
  is contained in
 an infinite multiple of the regular representation $\la_\Ga$ of $\Ga.$
 Hence, 
$$
\Vert \kappa_2(\nu)\Vert \leq \Vert \la_\Ga(\nu)\Vert^{1/2n+2}
$$
and so, 
$$
\Vert \kappa_0(\nu)\Vert \leq \max\{\Vert \la_\Ga(\nu)\Vert^{1/2n+2}, \Vert \kappa_1(\nu)\Vert\},
$$
where $\kappa_0$ is the restriction of $\kappa$ to $L^2_0(\X).$

Assume that the  subgroup generated by the support of $\nu$  coincides with $\Ga.$
If $\Ga$ is not amenable then  $\Vert \la_\Ga(\nu)\Vert <1$ by  Kersten's theorem (see 
\cite[Appendix G]{BHV});
so, in this case, the action of $\Ga$ on $\X$ has a spectral gap if and only if 
$\Vert \kappa_1(\nu)\Vert<1,$ as stated in Theorem~\ref{Theo1}.

Observe that, if $\Ga$ is amenable, then  the action of $\Ga$ on $\X$ or $\Sol$ does not have a spectral gap;
indeed, by   a general result  (see \cite[Theorem 2.4]{JuRo}),
 no  action of  a countable amenable group  by measure preserving transformations on a 
 non-atomic probability space  has a spectral gap.

Let us look more closely to the case $n=1.$ We have  
$Sp_{2}(\ZZ[1/S])=SL_2(\ZZ[1/S]$
and the stabilizer of every element in $\widehat{\mathbf{Sol}_S}\setminus\{ \mathbf{1}_{\mathbf{Sol}_S}\}$
is conjugate to the group of unipotent matrices  in $SL_2(\ZZ[1/S]$ and hence amenable. 
This implies that $\kappa_1$ is weakly contained in $\la_\Ga;$
so,  we have 
$$\Vert \kappa_1(\nu)\Vert<1 \Longleftrightarrow \Ga \text{ is not amenable.} $$
As a consequence, we see that the action of $\Ga$ on $\X$ has a spectral gap if and only if 
$\Ga$ is not amenable.


\begin{thebibliography}{BeMa00}

\bibitem[BeFr20]{BekkaFrancini}
   B. Bekka and C. Francini,
 Spectral gap property and strong ergodicity for groups of affine
   transformations of solenoids,
   \newblock \emph{Ergodic Theory Dynam. Systems}
   \textbf{40} (2020),  no.5,1180--1193.
   
   


\bibitem[Bekk16]{BekkaSG} B. Bekka. Spectral rigidity of group actions on homogeneous spaces.
\newblock
In: Handbook of group actions. Vol. IV, 563--622, Adv. Lect. Math. (ALM), 41, Int. Press, Somerville, MA, 2018.
 

\bibitem[BeGu15]{BachirYves} B. Bekka, Y. Guivarc'h. On the spectral theory of groups of affine transformations of compact nilmanifolds.
\newblock \emph{Ann. Sci.  \'Ecole Normale Sup\'erieure} \textbf{48} (2015), 607--645.

\bibitem[BeHV08]{BHV} B. Bekka, P. de la Harpe, A. Valette. \newblock \emph{Kazhdan's Property (T).}
Cambridge University Press 2008.

\bibitem[BeHe11]{BekkaHeu} B. Bekka, J-R. Heu. 
Random products of automorphisms of Heisenberg nilmanifolds 
and Weil's representation.
\newblock \emph{Ergod. Th\& Dynam.Sys.} \textbf{31}, 1277--1286 (2011).

\bibitem[Fox89]{Fox} J. Fox.
Adeles and the spectrum of compact nilmanifolds.
\newblock \emph{Pacific J. Math.} {\bf 140}, 233--250 (1989).

\bibitem[GGPS69]{GGPS69}
I.M.\ Gelfand, M.I.\ Graev, 
and I.I.\ Pyatetskii-Shapiro,
\emph{Representation theory and automorphic functions},
W.B.\ Saunders Company, 1969

\bibitem[CoGr89]{CoGr} L. Corwin and F. Greenleaf. 
\newblock \emph{Representations of nilpotent Lie groups and their applications},
\newblock %
 Cambridge University Press 1989.



\bibitem[HeRo63]{HeRo--63} E. Hewitt, K. Ross.
\emph{Abstract harmonic analysis}, Volume I.
Die Grundlehren der mathematischen Wissenschaften \textbf{115},  Springer-Verlag, New York,1963.

\bibitem[Howe82]{Howe3} Howe, R. On a notion of rank for unitary representations of the classical groups.  
\newblock In:\emph{Harmonic analysis and group representations}, 223--331, Liguori, Naples, 1982.

\bibitem[HoMo79]{HoMo} R. Howe, C. C. Moore.
 Asymptotic properties of unitary representations.
 \newblock \emph{J. Funct. Anal.} {\bf 32}, 72--96 (1979).

\bibitem[HoTa92]{HoTa} R. Howe, E.C. Tan.
\newblock\emph{Non-abelian harmonic analysis,}
\newblock Springer 1992.

\bibitem[JuRo79]{JuRo} A. del Junco, J. Rosenblatt.
 Counterexamples in ergodic theory.
 \newblock \emph{Math. Ann.} {\bf 245}, 185--197 (1979).

 \bibitem[Kiri62]{Kirillov} A. A. Kirillov. Unitary representations of nilpotent Lie groups. 
 \newblock \emph{Russian Math. Surveys} {\bf 17}, 53--104 (1962).


\bibitem[Mack58]{Mackey} G.W. Mackey.
Unitary representations of group extensions I.
\newblock \emph{Acta Math.}{\bf 99}, 265--311 (1958). 

\bibitem[Mack76]{Mackey-Livre} G.W. Mackey.
\newblock\emph{The theory of unitary group representations,}
\newblock Chicago Lectures in Mathematics, The University of Chicago Press 1976.

\bibitem[Moor65]{Moore}  C. C. Moore.
 Decomposition of unitary representations defined by discrete
subgroups of nilpotent Lie groups.
 \newblock \emph{Ann. Math.} {\bf 82}, 146--182 (1965).
 
 \bibitem[Schm81]{Schmidt} K. Schmidt.
Amenability, Kazhdan's property T, strong ergodicity and invariant 
means for ergodic group-actions.
\newblock \emph{Ergodic Theory Dynamical Systems} \textbf{1} (1981),  223--236.



\bibitem[Weil74]{Weil} A. Weil.
\newblock\emph{Basic number theory},
  \newblock Springer-Verlag, New York-Berlin1974.

\end{thebibliography}
\end{document}